\documentclass[10pt]{elsarticle}
\usepackage[utf8]{inputenc}
\usepackage[english]{babel}
\usepackage{amsmath}
\usepackage{amssymb}
\usepackage{amsfonts}
\usepackage{mathtools}
\usepackage{amsthm}
\usepackage{dsfont}
\usepackage{rotating}
\usepackage{graphicx}
\usepackage{floatflt,epsfig}
\usepackage{lineno,hyperref}
\usepackage{enumerate}
\usepackage{colortbl}
\usepackage{array,tabularx,tabulary,booktabs}
\usepackage{longtable}
\usepackage{multirow}
\usepackage{wrapfig}
\usepackage{subcaption}
\usepackage[table]{xcolor}
\newcolumntype{^}{>{\currentrowstyle}}

\journal{Arxiv}
\setcounter{page}{1}
\newtheorem{lemma}{Lemma}

\newtheorem{theorem}{Theorem}

\newtheorem{problem}{Problem}
\newtheorem{example}{Example}
\bibliographystyle{elsarticle-num}

\begin{document}
\renewcommand{\abstractname}{Abstract}
\renewcommand{\refname}{References}
\renewcommand{\tablename}{Figure.}
\renewcommand{\arraystretch}{0.9}
\thispagestyle{empty}
\sloppy

\begin{frontmatter}
\title{Minimum supports of eigenfunctions with the second largest eigenvalue of the Star graph\tnoteref{grant}}
\tnotetext[grant]{The reported study was funded by RFBR according to the research project 17-51-560008. The second and the fourth authors are partially supported by the program of fundamental scientific research of the SB RAS N I.5.1, project No.0314-2019-0016.}

\author[01]{Vladislav Kabanov}
\ead{vvk@imm.uran.ru}

\author[03,04]{Elena~V.~Konstantinova}
\ead{e\_konsta@math.nsc.ru}

\author[01,02]{Leonid~Shalaginov}
\ead{44sh@mail.ru}

\author[03,04]{Alexandr~Valyuzhenich\corref{cor1}}
\cortext[cor1]{Corresponding author}
\ead{graphkiper@mail.ru}

\address[01]{Krasovskii Institute of Mathematics and Mechanics, S. Kovalevskaja st. 16 \\ Yekaterinburg
620990, Russia}
\address[02] {Chelyabinsk State University, Brat'ev Kashirinyh st. 129\\Chelyabinsk  454021, Russia}
\address[03]{Sobolev Institute of Mathematics, Ak. Koptyug av. 4, Novosibirsk 630090, Russia}
\address[04]{Novosibirsk State University, Pirogova str. 2, Novosibirsk, 630090, Russia}


\begin{abstract}
The Star graph $S_n$, $n\ge 3$, is the Cayley graph on the symmetric group $Sym_n$
generated by the set of transpositions $\{(12),(13),\ldots,(1n)\}$.
In this work we study eigenfunctions of $S_n$ corresponding to the second largest eigenvalue $n-2$.
For $n\ge 8$ and $n=3$, we find the minimum cardinality of the support of an eigenfunction of $S_n$ corresponding to the second largest eigenvalue and obtain a characterization of eigenfunctions with the minimum cardinality
of the support.
\end{abstract}

\begin{keyword}
Star graph; eigenfunction; eigenspace; minimum support; the second largest eigenvalue; completely regular code
\vspace{\baselineskip}
\MSC[2010] 05C50\sep 05C25\sep 05E15\sep 05B30
\end{keyword}
\end{frontmatter}

\section{Introduction}
{\em The Star graph} $S_n=Cay(Sym_n,S)$, $n\ge 3$, is the Cayley graph on the symmetric group $Sym_n$  with the generating set $S =\{(1~i) ~|~ i \in \{2,\ldots,n\}\}$. It is a connected bipartite $(n-1)$-regular graph of order $n!\,$ and diameter $diam(S_n)=\lfloor \frac{3(n-1)}{2}\rfloor$ ~\cite{AK89}. Since this graph is bipartite it does not contain odd cycles but it does contain all even $l$-cycles where $l=6,8,\ldots,n!$  (with the sole exception when $l=4$)~\cite{JLD91} which means that $S_n$ is hamiltonian.

The spectrum of the Star graph is integral \cite{ChapuyFeray,KrakovskiMohar}. More precisely, for $n \ge 3$ and for each integer $1\le k \le n-1$, the values $\pm (n-k)$ are eigenvalues of $S_n$; if $n\ge 4$, then $0$ is an eigenvalue of $S_n$. Since the Star graph is bipartite,
$\rm{mul}(n-k)=\rm{mul}(-n+k)$ for each integer $1\le k\le n$. Moreover, $\pm (n-1)$ are simple eigenvalues of $S_n$. A lower bound on multiplicities of the eigenvalues was found as $\left({n-2}\atop{k-1}\right)$~\cite{KrakovskiMohar}, and it was improved as~$\left({n-2}\atop{n-k-1}\right)\left({n-1}\atop{n-k}\right)$~\cite{ChapuyFeray} for any $1\le k \le n-1$. Later a method for getting explicit formulas for multiplicities of eigenvalues $\pm(n-k)$ in the Star graphs $S_n$ was suggested~\cite{AKK16,KK15}, and the behavior of the eigenvalues multiplicity function of the Star graph $S_n$ for eigenvalues $\pm(n-k)$ where $1\leq k \leq \frac{n+1}{2}$ was investigated~\cite{K18}. It was shown that the function has a polynomial behavior on $n$. Computational results showed that the same polynomial behavior of the eigenvalues multiplicity function occurs for any integers $n \geq 2$ and $1\leq k \leq n$. Moreover, explicit formulas for calculating multiplicities of eigenvalues $\pm(n-k)$ where $2\leq k \leq 12$ were found. In particular, $\rm{mul}(n-2)=\rm{mul}(2-n)=(n-1)(n-2)$.

We investigate the following problem.
\begin{problem}\label{MinSupport}
For a graph $\Gamma$ and its eigenvalue $\lambda$ to find the minimum cardinality of the support of a $\lambda$-eigenfunction of $\Gamma$.
\end{problem}
In many cases Problem~\ref{MinSupport} is directly related to the problem of finding the minimum possible difference of two combinatorial objects and to the problem of finding the minimum cardinality of the bitrades. In more details, these connections are described in~\cite{Krotovtezic,KrotovMogilnykhPotapov}. Problem~\ref{MinSupport} was studied for the bilinear forms of graphs in~\cite{SotnikovaBilinear}, for the cubical distance-regular graphs in~\cite{SotnikovaCubical}, for the Doob graphs in~\cite{Bespalov}, for the Grassmann graphs in \cite{KrotovMogilnykhPotapov}, for the Hamming graphs in~\cite{Krotovtezic,Potapov,Valyuzhenich,ValyuzhenichVorobev,VorobevKrotov}, for the Johnson graphs in~\cite{VMV} and for the Paley graphs in~\cite{GoryainovKabanovShalaginovValyuzhenich}.

We consider Problem \ref{MinSupport} for the Star graph $S_n$ and its eigenvalue $n-2$.
We find the minimum cardinality of the support of $(n-2)$-eigenfunctions of $S_n$ and give a characterization of $(n-2)$-eigenfunctions with the minimum cardinality of the support for $n\ge 8$ and $n=3$.
We also show that for $n\ge 8$ and $n=3$ an arbitrary $(n-2)$-eigenfunction of $S_n$
with the minimum cardinality of the support is the difference of the characteristic functions of two completely regular codes of covering radius $2$.

The paper is organized as follows. In Section \ref{Preliminaries}, we introduce basic definitions and give some preliminary results.
In Section \ref{SectionM(f)}, we reduce Problem \ref{MinSupport} for the Star graph $S_n$ and its eigenvalue $n-2$ to some extremal
problem on the set of real $n\times n$ matrices.
In Section \ref{SectionExtremalProblem}, we solve this extremal problem.
In Section \ref{SectionMainTheorem}, for $n\ge 8$ and $n=3$ we prove that the minimum cardinality of the support of an $(n-2)$-eigenfunction of $S_n$ is $2(n-1)!$ and give a characterization of eigenfunctions with the minimum cardinality
of the support.
In Section \ref{SectionCompletelyRegularCodes}, for $n\ge 8$ and $n=3$ we show that an arbitrary $(n-2)$-eigenfunction of $S_n$ with the minimum cardinality of the support is the difference of the characteristic functions of two completely regular codes of covering radius $2$.
\section{Preliminaries}\label{Preliminaries}
\subsection{Star graph}

Let $G$ be a group and $S$ be an inverse-closed identity-free generating set in $G$. The \emph{Cayley graph on} $G$ \emph{with the generating set} $S$ (denoted by $Cay(G,S)$) is the graph whose vertices are the elements of $G$, and any two elements $x,y \in G$ are adjacent in $Cay(G,S)$ whenever $xy^{-1} \in S$. For a positive integer $n \ge 3$, the \emph{Star graph} (denoted by $S_n$) is the Cayley graph on the symmetric group $Sym_n$ with the generating set $S =\{(1~i) ~|~ i \in \{2,\ldots,n\}\}$.

\subsection{Eigenfunctions of graphs}

Let $\Gamma=(V,E)$ be a graph with the adjacency matrix $A(\Gamma)$. The set of neighbors of a vertex $x$ is denoted by $N(x)$. Let $\lambda$ be an eigenvalue of the matrix $A(\Gamma)$. A function $f:V\longrightarrow{\mathbb{R}}$ is called a {\em $\lambda$-eigenfunction} of $\Gamma$ if $f\not\equiv 0$ and the equality
$$\lambda\cdot f(x)=\sum_{y\in{N(x)}}f(y)$$ holds for any $x\in V$. Note that the vector of values of a $\lambda$-eigenfunction is an eigenvector of $A(\Gamma)$ with eigenvalue $\lambda$.
The {\em support} of a function $f:V\longrightarrow{\mathbb{R}}$ is the set $Supp(f)=\{x\in V~|~f(x)\neq 0\}$.
For a function  $f:V\longrightarrow{\mathbb{R}}$ denote $E(f)=\{y\in \mathbb{R}~|~y=f(x),x\in V\}$.

Let $u\in\{1,\ldots,n\}$ and $v,w\in\{2,\ldots,n\}$, where $v\neq w$.
We define the function $f_{u}^{v,w}:Sym_{n}\longrightarrow{\mathbb{R}}$ by the following rule:
$$
f_{u}^{v,w}(\pi)=\begin{cases}
1,&\text{if $\pi(v)=u$;}\\
-1,&\text{if $\pi(w)=u$;}\\
0,&\text{otherwise.}
\end{cases}
$$

The following result is a particular case of Proposition 1 proved in \cite{GoryainovKabanovKonstantinovaShalaginovValyuzhenich}.

\begin{lemma}\label{f_{i}^{j,k}function}
Let $u\in\{1,\ldots,n\}$ and $v,w\in\{2,\ldots,n\}$, where $v\neq w$. Then for $n\geq 3$, the function $f_{u}^{v,w}$ is an $(n-2)$-eigenfunction of $S_n$.
\end{lemma}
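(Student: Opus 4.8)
The plan is to verify the defining identity $(n-2)\,f_{u}^{v,w}(\pi)=\sum_{\sigma\in N(\pi)}f_{u}^{v,w}(\sigma)$ pointwise at every $\pi\in Sym_n$. The first step is to describe the neighbours of $\pi$ explicitly as permutations. In $S_n$ the $n-1$ neighbours of $\pi$ are the permutations $\sigma_i$, $i\in\{2,\dots,n\}$, obtained by transposing the first coordinate with the $i$-th; that is, $\sigma_i$ coincides with $\pi$ except that $\sigma_i(1)=\pi(i)$ and $\sigma_i(i)=\pi(1)$. Writing $[P]$ for the indicator that equals $1$ when the statement $P$ holds and $0$ otherwise, we have $f_{u}^{v,w}(\sigma)=[\sigma(v)=u]-[\sigma(w)=u]$, so the whole claim reduces to a counting computation over the $n-1$ neighbours.

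The heart of the argument is the following observation: since $v,w\in\{2,\dots,n\}$, transposing the first coordinate with the $i$-th leaves the image of position $v$ unchanged for every $i\neq v$, and alters it only for the single index $i=v$, for which $\sigma_v(v)=\pi(1)$. Consequently
\[
\sum_{i=2}^{n}[\sigma_i(v)=u]=(n-2)\,[\pi(v)=u]+[\pi(1)=u],
\]
and the identical formula holds with $w$ in place of $v$. This is where the factor $n-2$ enters: each of the positions $v$ and $w$ is disturbed by exactly one of the neighbours, while the remaining $n-2$ neighbours reproduce the value $\pi(v)$ (respectively $\pi(w)$).

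Subtracting the two counts, the common term $[\pi(1)=u]$ cancels, and one obtains $\sum_{i=2}^{n}f_{u}^{v,w}(\sigma_i)=(n-2)\big([\pi(v)=u]-[\pi(w)=u]\big)=(n-2)\,f_{u}^{v,w}(\pi)$, which is precisely the eigenvalue equation. It remains only to note that $f_{u}^{v,w}\not\equiv 0$, which is clear since any permutation sending $v$ to $u$ gives the function value $1$; hence $f_{u}^{v,w}$ is a genuine $(n-2)$-eigenfunction.

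I expect the main obstacle to be purely a matter of bookkeeping conventions rather than mathematical depth: one has to fix whether the generators $(1\,i)$ act on positions or on values, and correspondingly whether the neighbours are the left or the right translates of $\pi$, since an inconsistent choice destroys the clean cancellation above and fails to return the eigenvalue $n-2$. Once the neighbours are correctly identified as the coordinate transpositions described above, the computation is entirely routine, the only substantive input being the elementary fact that a single transposition involving the first coordinate moves the image of any fixed position $v\neq1$ in exactly one neighbour out of $n-1$.
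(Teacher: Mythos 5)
Your proof is correct, and it differs from the paper only in that the paper gives no proof at all: this lemma is quoted there as a particular case of Proposition~1 of \cite{GoryainovKabanovKonstantinovaShalaginovValyuzhenich}, so your direct pointwise verification is a self-contained argument for a fact the paper outsources to the literature. The computation itself is exactly right: taking the neighbours of $\pi$ to be the position swaps $\sigma_i$ (with $\sigma_i(1)=\pi(i)$, $\sigma_i(i)=\pi(1)$, and $\sigma_i(j)=\pi(j)$ otherwise), the image of a position $v\in\{2,\dots,n\}$ is disturbed by the single neighbour $\sigma_v$, giving (in your indicator notation)
\[
\sum_{i=2}^{n}\bigl[\sigma_i(v)=u\bigr]=(n-2)\bigl[\pi(v)=u\bigr]+\bigl[\pi(1)=u\bigr],
\]
likewise for $w$, and the common term $[\pi(1)=u]$ cancels in the difference, which is the eigenvalue equation; nonvanishing is immediate since $\pi(v)=u$ forces $\pi(w)\neq u$. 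What your route buys is self-containment at the cost of a page of routine checking; what the citation buys the authors is brevity and the greater generality of the quoted proposition.

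The bookkeeping caveat in your final paragraph is genuinely the only delicate point, and you resolved it the right way; it is worth recording that it is not vacuous. If one reads the paper's adjacency rule $xy^{-1}\in S$ with the usual right-to-left composition of permutations, the neighbours of $\pi$ come out as the value swaps $(1\,i)\pi$ rather than your position swaps, and for those the function $f_{u}^{v,w}$ as defined is \emph{not} an eigenfunction: already in $S_3$ the neighbours of $(1\,2)$ are then $e$ and $(1\,2\,3)$, on which $f_{2}^{2,3}$ sums to $1$, while $(n-2)\,f_{2}^{2,3}\bigl((1\,2)\bigr)=0$. The lemma, and with it the rest of the paper, is coherent precisely under the position-swap reading you adopt (equivalently, neighbours $\pi\,(1\,i)$, or composing permutations left-to-right), which is evidently the intended convention; the two readings give isomorphic graphs via $\pi\mapsto\pi^{-1}$, so only the explicit form of the eigenfunctions, not the spectrum, depends on the choice.
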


Denote $$\mathcal{F}_2=\{f_{u}^{2,w}~|~u\in\{2,\ldots,n\},w\in\{3,\ldots,n\}\}.$$

The following lemma was proved in \cite{GoryainovKabanovKonstantinovaShalaginovValyuzhenich}.

\begin{lemma}[\cite{GoryainovKabanovKonstantinovaShalaginovValyuzhenich}, Lemma 15]\label{basis}
For $n\ge 3$, the set $\mathcal{F}_2$ forms a basis of the eigenspace of $S_n$ with eigenvalue $n-2$.
\end{lemma}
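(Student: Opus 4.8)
The plan is to reduce the statement to a dimension count together with a linear-independence check. First I would observe that $|\mathcal{F}_2|=(n-1)(n-2)$, since $u$ ranges over the $n-1$ values $\{2,\ldots,n\}$ and $w$ over the $n-2$ values $\{3,\ldots,n\}$. By Lemma~\ref{f_{i}^{j,k}function} every $f_u^{2,w}$ is an $(n-2)$-eigenfunction, so $\mathcal{F}_2$ is contained in the eigenspace $U$ of $S_n$ with eigenvalue $n-2$. Since $\dim U=\mathrm{mul}(n-2)=(n-1)(n-2)$ by the multiplicity formula recalled in the Introduction, the set $\mathcal{F}_2$ is a basis of $U$ if and only if it is linearly independent. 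Thus the whole problem collapses to proving that the $(n-1)(n-2)$ functions $f_u^{2,w}$ are linearly independent.

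To do this I would assume a vanishing combination $\sum_{u,w}c_{u,w}f_u^{2,w}\equiv 0$ and evaluate it at cleverly chosen permutations. The computational backbone is the identity $f_u^{2,w}(\pi)=\mathds{1}[\pi(2)=u]-\mathds{1}[\pi(w)=u]$, valid because $2\neq w$ forces the two events to be disjoint. Summing against the coefficients and reading off the value at an arbitrary $\pi$ yields
$$\mathds{1}[\pi(2)\neq 1]\sum_{w=3}^{n}c_{\pi(2),w}=\sum_{w=3}^{n}\mathds{1}[\pi(w)\neq 1]\,c_{\pi(w),w},$$
an equation that must hold for every $\pi\in Sym_n$. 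Arranging the unknowns into the $(n-1)\times(n-2)$ matrix $C=(c_{u,w})$, the right-hand side is exactly a sum along a partial transversal of $C$.

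The evaluation strategy then has two stages. First I would specialize to permutations with $\pi(2)=1$: the left side vanishes and the right side becomes $\sum_{w=3}^{n}c_{\pi(w),w}$, where the restriction of $\pi$ to $\{3,\ldots,n\}$ is an arbitrary injection into $\{2,\ldots,n\}$. Letting these injections range freely and subtracting two that differ by a swap in two columns forces $c_{u,w_1}-c_{u,w_2}$ to be independent of $u$, hence the additive shape $c_{u,w}=r_u+s_w$; comparing transversals that omit different rows then forces all the $r_u$ to be equal, say to $r$. Second I would take permutations with $\pi(2)\neq 1$ and $\pi(w_0)=1$ for a prescribed $w_0$; feeding the additive shape into the displayed identity makes the $\sum_w s_w$ terms cancel and collapses everything to $r+s_{w_0}=0$, i.e. $c_{u,w_0}=0$ for all $u$. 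Letting $w_0$ vary over $\{3,\ldots,n\}$ gives $C=0$.

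The hard part will be the combinatorial bookkeeping in this isolation argument: because each $f_u^{2,w}$ reads two coordinates of $\pi$ at once, no single permutation exposes a single coefficient, and one must instead organize whole families of permutations so that their transversals pin down first the additive structure and then the individual entries. This requires $n$ large enough to realize all the needed partial permutations, so that both the two-column swap and the prescribed-fixed-point constructions are available; the degenerate case $n=3$, where $\mathcal{F}_2=\{f_2^{2,3},f_3^{2,3}\}$ has only two elements, is then disposed of by direct inspection.
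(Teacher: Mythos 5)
Your proposal cannot be checked against an in-paper argument, because the paper does not prove this statement at all: Lemma~\ref{basis} is imported as a black box from Lemma~15 of \cite{GoryainovKabanovKonstantinovaShalaginovValyuzhenich}. So your proof has to stand on its own, and on inspection it does. The reduction is sound: $|\mathcal{F}_2|=(n-1)(n-2)$, each $f_u^{2,w}$ lies in the $(n-2)$-eigenspace by the paper's first lemma, and the Introduction records $\mathrm{mul}(n-2)=(n-1)(n-2)$, so it suffices to prove linear independence. One caveat: that multiplicity formula is itself only quoted from the literature, so your argument inherits an external dependence (the cited source must establish either the multiplicity or spanning by some independent means); this is consistent with how the present paper operates, but it means your proof is self-contained only modulo that fact.

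The independence computation itself verifies. Since $2\neq w$ and $\pi$ is injective, the two defining events of $f_u^{2,w}$ are disjoint, so your displayed identity is exactly what evaluation of $\sum c_{u,w}f_u^{2,w}$ at $\pi$ gives. Writing $C=(c_{u,w})$ as you do: for $\pi(2)=1$ the restriction of $\pi$ to $\{3,\ldots,n\}$ does range over all injections into $\{2,\ldots,n\}$ (the omitted value is recovered as $\pi(1)$), so every column transversal of $C$ sums to zero; the two-column swap yields $c_{u_1,w_1}+c_{u_2,w_2}=c_{u_1,w_2}+c_{u_2,w_1}$, whence $c_{u,w}=r_u+s_w$; and a transversal omitting row $u$ sums to $\sum_{u'}r_{u'}-r_u+\sum_{w}s_w$, forcing all $r_u$ to equal a common $r$. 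Then for $\pi(2)=u^*\neq 1$ and $\pi(w_0)=1$ the identity becomes $(n-2)r+\sum_{w}s_w=(n-3)r+\sum_{w}s_w-s_{w_0}$, so $r+s_{w_0}=0$ and hence $c_{u,w_0}=0$ for all $u$ and all $w_0$, i.e. $C=0$. Two minor remarks: the swap step genuinely needs two columns, hence $n\ge 4$, so the quarantine of $n=3$ is needed exactly there --- although for $n=3$ your first stage already finishes, since the transversals are the single entries $c_{2,3}$ and $c_{3,3}$; and the second stage uses the first stage's conclusion that the $r_u$ coincide when evaluating its right-hand side, so the order of the two stages is essential. Both are consistent with what you wrote; the ``bookkeeping'' you flag as the hard part is routine and contains no hidden obstruction.
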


Denote $$\mathcal{F}=\{f_{u}^{v,w}~|~u\in\{1,\ldots,n\},v,w\in\{2,\ldots,n\},v\neq w\}.$$
In Section \ref{SectionMainTheorem} we prove that if $f$ is an $(n-2)$-eigenfunction of $S_n$, then $|Supp(f)|\ge 2(n-1)!$. Moreover, we prove that $|Supp(f)|=2(n-1)!$ if and only if $f=c\cdot \tilde{f}$, where $c$ is a constant and $\tilde{f}\in \mathcal{F}$.

\subsection{Matrices}

Let  $M=(m_{i,j})$ be a real $n\times n$ matrix.
We say that $M$ is {\em special} if $M$ is non-zero and the following conditions hold:
\begin{enumerate}
  \item $m_{i,1}=0$ for any $i\in{\{1,\ldots,n\}}$.
  \item $m_{1,j}=0$ for any $j\in{\{1,\ldots,n\}}$.
  \item $\sum_{j=1}^{n}m_{i,j}=0$ for any $i\in{\{1,\ldots,n\}}$.
\end{enumerate}

\begin{example}\label{}
The matrix $$M=
\begin{pmatrix}
0 & 0 & 0 & 0\\
0 & 1 & 0 & -1\\
0 & 3 & -1 & -2\\
0 & -1 & -1 & 2
\end{pmatrix}$$
is special.
\end{example}

Let $M=(m_{i,j})$ be a real $n\times n$ matrix and let $X$ be a subset of $Sym_n$.
Denote $$g_M(n)=|\{\pi\in Sym_n~|~\sum_{i=1}^{n}m_{i,\pi(i)}\neq 0\}|$$ and
$$g_M(X)=|\{\pi\in X~|~\sum_{i=1}^{n}m_{i,\pi(i)}\neq 0\}|.$$

For an $n\times n$ matrix $M$ and $\alpha\in\{1,\ldots,n\}$ denote by $R_{\alpha}$ the $\alpha$-th row of $M$.

\subsection{Equitable partitions and completely regular codes}
Let $\Gamma=(V,E)$ be a graph.
An ordered partition $(C_1,\ldots, C_{r})$ of $V$ is called {\em equitable} if for any
$i,j\in \{1,\ldots,r\}$ there is $S_{i,j}$ such that any vertex of $C_i$
has exactly $S_{i,j}$ neighbors in $C_j$.
The matrix $S=(S_{i,j})_{i,j\in
\{1,\ldots,r\}}$ is called the {\em quotient matrix} of the
equitable partition.
A set $C\subseteq V$ is called a {\em completely regular code} in $\Gamma$ if the partition $(C^{(0)},\ldots,C^{(\rho)})$
is equitable, where $C^{(d)}$ is the set of vertices at distance $d$ from $C$ and $\rho$ ({\em the covering radius} of $C$) is the
maximum $d$ for which $C^{(d)}$ is nonempty.
In other words, a subset of $V$ is a completely regular code in $\Gamma$
if the distance partition with respect to the subset is equitable.
For more information on equitable partitions and completely regular codes see \cite{BorgesRifaZinoviev,Godsil}.

\section{Correspondence between the values of an $(n-2)$-eigenfunction of $S_n$ and the diagonals of an $n\times n$ matrix}\label{SectionM(f)}

In this section, for an arbitrary $(n-2)$-eigenfunction $f$ of $S_n$, we introduce a special $n\times n$ matrix $M(f)$ and match the permutations from $Sym_n$ with diagonals of $M(f)$ in such a way that the value of $f$ on a permutation $\pi$ is the sum of elements of the corresponding diagonal of $M(f)$.

Let $f$ be an $(n-2)$-eigenfunction of $S_n$. By Lemma \ref{basis}, there exist the numbers $\mu_{i}^{j}(f)\in \mathbb{R}$, where $i\in{\{2,\ldots,n\}}$ and $j\in{\{3,\ldots,n\}}$, such that $$f=\sum_{\substack{j\in{\{3,\ldots,n\}}\\ i\in{\{2,\ldots,n\}}}}\mu_{i}^{j}(f)\cdot f_{i}^{2,j}.$$
We define the matrix $M(f)=(m_{i,j}(f))_{i,j\in{\{1,\ldots,n\}}}$ by the following rule:
\begin{equation}\label{M(f)Definition}
m_{i,j}(f)=\begin{cases}
-\mu_{i}^{j}(f),&\text{if $i>1$ and $j>2$;}\\
\sum_{s=3}^{n}\mu_{i}^{s}(f),&\text{if $i>1$ and $j=2$;}\\
0,&\text{if $i=1$ or $j=1$.}
\end{cases}
\end{equation}

\begin{lemma}\label{f(pi)}
Let $f$ be an $(n-2)$-eigenfunction of $S_n$. Then
$$f(\pi)=\sum_{i=1}^{n}m_{i,\pi^{-1}(i)}(f)$$ for any $\pi\in Sym_n$.
\end{lemma}
\begin{proof}
For $i\in{\{2,\ldots,n\}}$ denote $$f_i=\sum_{j=3}^{n}\mu_{i}^{j}(f)\cdot f_{i}^{2,j}.$$
By the definition of $f_{u}^{v,w}$ we have
\begin{equation}\label{f_i(pi)}
f_i(\pi)=\begin{cases}
-\mu_{i}^{j}(f),&\text{if $\pi(j)=i$ and $j\in{\{3,\ldots,n\}}$;}\\
\sum_{s=3}^{n}\mu_{i}^{s}(f),&\text{if $\pi(2)=i$;}\\
0,&\text{if $\pi(1)=i$.}
\end{cases}
\end{equation}

Using the equalities (\ref{M(f)Definition}) and (\ref{f_i(pi)}), we see that $f_i(\pi)=m_{i,\pi^{-1}(i)}(f)$ for any $i\in{\{2,\ldots,n\}}$ and $\pi\in Sym_n$.
Then $$f(\pi)=\sum_{i=2}^{n}f_i(\pi)=\sum_{i=2}^{n}m_{i,\pi^{-1}(i)}(f)$$ for any $\pi\in Sym_n$.
Since $m_{1,\pi^{-1}(1)}(f)=0$ due to the definition of $M(f)$, we obtain that $$f(\pi)=\sum_{i=1}^{n}m_{i,\pi^{-1}(i)}(f).$$
\end{proof}

\begin{lemma}\label{E(f)={}}
Let $f$ be an $(n-2)$-eigenfunction of $S_n$. Then $$E(f)=\left\{\sum_{i=1}^{n}m_{i,\pi(i)}(f)~|~\pi\in Sym_n\right\}.$$
\end{lemma}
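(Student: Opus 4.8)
The plan is to read off the claim directly from Lemma \ref{f(pi)} together with a single change of summation variable. First I would unwind the definition of $E(f)$: since $E(f)$ is by definition the set of values attained by $f$, and $f$ is defined on $Sym_n$, we have $E(f)=\{f(\pi)~|~\pi\in Sym_n\}$.

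Next I would substitute the formula from Lemma \ref{f(pi)}, namely $f(\pi)=\sum_{i=1}^{n}m_{i,\pi^{-1}(i)}(f)$, to obtain
$$E(f)=\left\{\sum_{i=1}^{n}m_{i,\pi^{-1}(i)}(f)~\middle|~\pi\in Sym_n\right\}.$$
The final step is to eliminate the inverse. The key observation is that inversion $\pi\mapsto\pi^{-1}$ is a bijection of $Sym_n$ onto itself, so as $\pi$ runs over $Sym_n$ the permutation $\sigma=\pi^{-1}$ also runs over all of $Sym_n$. Re-indexing the set above by $\sigma$ therefore leaves it unchanged and produces $\{\sum_{i=1}^{n}m_{i,\sigma(i)}(f)~|~\sigma\in Sym_n\}$, which is exactly the asserted description of $E(f)$.

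I do not expect a genuine obstacle here: the statement is essentially a reformulation of Lemma \ref{f(pi)}, the only mathematical content being the harmless observation that the collection of diagonal sums of $M(f)$ is insensitive to whether the diagonals are indexed by $\pi$ or by $\pi^{-1}$. The single point worth stating explicitly is the bijectivity of inversion on the symmetric group, which is precisely what legitimizes the change of variable $\sigma=\pi^{-1}$; everything else is a direct substitution.
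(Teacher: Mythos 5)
Your proof is correct and follows essentially the same route as the paper: both reduce the claim to Lemma \ref{f(pi)} and the observation that $\pi\mapsto\pi^{-1}$ is a bijection of $Sym_n$, so the set of diagonal sums is unchanged by the re-indexing. The only difference is cosmetic ordering --- the paper states the set equality first and then invokes Lemma \ref{f(pi)}, while you substitute first and re-index afterwards.
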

\begin{proof}
We note that $$\left\{\sum_{i=1}^{n}m_{i,\pi(i)}(f)~|~\pi\in Sym_n\right\}=\left\{\sum_{i=1}^{n}m_{i,\pi^{-1}(i)}(f)~|~\pi\in Sym_n\right\}.$$
Then by Lemma \ref{f(pi)} we have $$E(f)=\left\{\sum_{i=1}^{n}m_{i,\pi(i)}(f)~|~\pi\in Sym_n\right\}.$$
\end{proof}

Using Lemma \ref{E(f)={}} and the definition of $g_M(n)$, we immediately obtain the following result.

\begin{lemma}\label{Supp(f)=g_M(n)}
Let $f$ be an $(n-2)$-eigenfunction of $S_n$. Then $|Supp(f)|=g_{M(f)}(n)$.
\end{lemma}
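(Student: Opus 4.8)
The plan is to count the support directly through the diagonal-sum formula of Lemma \ref{f(pi)} and then transport the count along the inversion bijection of $Sym_n$. First I would unwind the definitions: $Supp(f)$ is by definition the set of permutations $\pi\in Sym_n$ for which $f(\pi)\neq 0$, so the entire task reduces to showing that $|Supp(f)|$ agrees with $g_{M(f)}(n)$, which counts those $\sigma\in Sym_n$ with $\sum_{i=1}^{n}m_{i,\sigma(i)}(f)\neq 0$.

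Next I would substitute the closed form $f(\pi)=\sum_{i=1}^{n}m_{i,\pi^{-1}(i)}(f)$ supplied by Lemma \ref{f(pi)}. This rewrites the support as
$$Supp(f)=\Bigl\{\pi\in Sym_n~\Big|~\sum_{i=1}^{n}m_{i,\pi^{-1}(i)}(f)\neq 0\Bigr\}.$$
The key (and essentially only) idea is that the map $\pi\mapsto\pi^{-1}$ is a bijection of $Sym_n$ onto itself, so it preserves the cardinality of any subset singled out by a condition on $\pi^{-1}$. Applying it with $\sigma=\pi^{-1}$ turns the defining condition $\sum_{i}m_{i,\pi^{-1}(i)}(f)\neq 0$ into $\sum_{i}m_{i,\sigma(i)}(f)\neq 0$ without altering the number of elements, whence $|Supp(f)|=|\{\sigma\in Sym_n\mid\sum_{i=1}^{n}m_{i,\sigma(i)}(f)\neq 0\}|=g_{M(f)}(n)$. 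This is exactly the inversion bijection already recorded, at the level of value sets, in Lemma \ref{E(f)={}}; here it is used at the level of counting.

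I expect no real obstacle here, which is precisely why the claim is flagged as immediate: all the content is carried by Lemma \ref{f(pi)}, and what remains is the elementary fact that inversion is a bijection of $Sym_n$. The single point worth stating carefully is that one should apply the bijection to the counting statement itself rather than merely invoke the set equality of Lemma \ref{E(f)={}}, since equality of the sets of attained values would not, by itself, control how many permutations realise a nonzero value.
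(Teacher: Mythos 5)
Your proof is correct, and it is in substance the argument the paper intends: the pointwise formula of Lemma \ref{f(pi)} combined with the fact that $\pi\mapsto\pi^{-1}$ is a bijection of $Sym_n$. The only difference is one of bookkeeping. The paper presents the lemma as an immediate consequence of Lemma \ref{E(f)={}} (equality of value sets) together with the definition of $g_{M}(n)$, whereas you apply the inversion bijection directly to the counting statement. Your closing caveat is well taken, and it is the one point where your write-up is tighter than the paper's: equality of the sets of attained values does not, by itself, control how many permutations attain a nonzero value, so the literal citation of Lemma \ref{E(f)={}} conceals exactly the bijection step you make explicit. (That step does appear in the paper, but inside the proof of Lemma \ref{E(f)={}}, where the identity $\{\sum_{i=1}^{n} m_{i,\pi(i)}(f)\mid\pi\in Sym_n\}=\{\sum_{i=1}^{n} m_{i,\pi^{-1}(i)}(f)\mid\pi\in Sym_n\}$ is noted.) In short: same mathematics, with your version spelling out at the level of cardinalities the argument the paper leaves implicit at the level of value sets.
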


\section{Extremal problem on the set of all special $n\times n$ matrices}\label{SectionExtremalProblem}

In view of Lemma \ref{Supp(f)=g_M(n)}, in order to solve Problem \ref{MinSupport} for the Star graph $S_n$ and its eigenvalue $n-2$, it suffices to find the minimum value of $g_{M(f)}(n)$, where $n$ is fixed and $f$ is an arbitary $(n-2)$-eigenfunction of $S_n$. Since the matrix $M(f)$ is special, in this section we focus on the following extremal problem formulated for the class of special matrices.

\begin{problem}\label{MinSpecMatr}
Given a positive integer $n$, to find the minimum value of $g_M(n)$ for the class of special $n \times n$ matrices $M$.
\end{problem}

In this section, we solve Problem \ref{MinSpecMatr} (see Theorem 1) and prove that $g_M(n)\ge 2(n-1)!$ holds for any special $n \times n$ matrix $M$ with $n \ge 8$ or $n = 3$. We then prove that this bound is tight and classify the special matrices in the equality case. This finally leads to a solution of Problem \ref{MinSupport} for the Star graph $S_n$ and its eigenvalue $n-2$ (see Theorem \ref{MainTheorem}).

Let $M=(m_{i,j})$ be a real $n\times n$ matrix and let $(A_1,\ldots,A_t)$ be a partition of $\{1,\ldots,n\}$, where $t\ge 2$.
Let $\alpha,\beta\in \{1,\ldots,n\}$ and $\alpha\neq \beta$.
We say that $R_{\alpha}$ and $R_{\beta}$ have the {\em $(A_1,\ldots,A_t)$-property} if for any $k,m\in{\{1,\ldots,t\}}$, $k\neq m$ and for any
$a\in{A_k}$, $b\in{A_m}$ the condition
$$m_{\alpha,a}+m_{\beta,b}\neq m_{\alpha,b}+m_{\beta,a}$$
holds.

\begin{example}\label{2}
Let us consider the matrix $$M=
\begin{pmatrix}
0 & 0 & 0 & 0\\
0 & 1 & -1 & 0\\
2 & 2 & 0 & 0\\
0 & 0 & 3 & 3
\end{pmatrix}.$$
\begin{enumerate}
    \item The rows $R_1$ and $R_2$ have $(A_1,A_2,A_3)$-property, where $A_1=\{1,4\}$, $A_2=\{2\}$ and $A_3=\{3\}$.
    \item The rows $R_1$ and $R_3$ have $(A_1,A_2)$-property, where $A_1=\{1,2\}$ and $A_2=\{3,4\}$.
    \item The rows $R_3$ and $R_4$ have $(A_1,A_2)$-property, where $A_1=\{1,2\}$ and $A_2=\{3,4\}$.
\end{enumerate}
\end{example}

In Lemmas \ref{Lemma1} and \ref{Lemma2}, we obtain lower bounds for $g_M(n)$, where $M$ is an arbitrary real matrix having a pair of rows with the $(A_1,\ldots,A_t)$-property.

Let  $\{a_1,\ldots,a_{\ell}\}$ and $\{\alpha_1,\ldots,\alpha_{\ell}\}$ be subsets of $\{1,\ldots,n\}$ for some $1\le \ell\le n$.
Denote by $S_{a_1,\ldots,a_{\ell}}^{\alpha_1,\ldots,\alpha_{\ell}}$ the set of permutations $\pi\in Sym_n$ such that
$\pi(\alpha_i)=a_i$ for any $i\in\{1,\ldots,\ell\}$.

\begin{lemma}\label{Lemma1}
Let $M=(m_{i,j})$ be a real $n\times n$ matrix. Let $\{c_1,\ldots,c_h,a,b\}$ and $\{\gamma_1,\ldots,\gamma_{h},\alpha,\beta\}$ be subsets of $\{1,\ldots,n\}$ for some $0\le h\le n-2$.
Suppose $R_{\alpha}$ and $R_{\beta}$ have the $(A_1,\ldots,A_t)$-property and $a\in A_k$, $b\in A_m$ for some $k,m\in{\{1,\ldots,t\}}$ and $k\neq m$. Then
$$g_M\left(S_{c_1,\ldots,c_h,a,b}^{\gamma_1,\ldots,\gamma_{h},\alpha,\beta}\cup S_{c_1,\ldots,c_h,b,a}^{\gamma_1,\ldots,\gamma_{h},\alpha,\beta}\right)\ge (n-h-2)!.$$
\end{lemma}
\begin{proof}
Denote $Y_1=S_{c_1,\ldots,c_h,a,b}^{\gamma_1,\ldots,\gamma_{h},\alpha,\beta}$ and $Y_2=S_{c_1,\ldots,c_h,b,a}^{\gamma_1,\ldots,\gamma_{h},\alpha,\beta}$.
We note that for any permutation $\pi\in Y_1$ there is a unique permutation  $\pi'\in Y_2$ such that $\pi(s)=\pi'(s)$ for any $s\in{\{1,2,\ldots,n\}}\setminus \{\alpha,\beta\}$.
Then
$$\sum_{i=1}^{n}m_{i,\pi(i)}-\sum_{i=1}^{n}m_{i,\pi'(i)}=m_{\alpha,a}+m_{\beta,b}-m_{\alpha,b}-m_{\beta,a}$$ for any $\pi\in Y_1$.
Since $R_{\alpha}$ and $R_{\beta}$ have the $(A_1,\ldots,A_t)$-property, $a\in A_k$ and $b\in A_m$, we have
$$\sum_{i=1}^{n}m_{i,\pi(i)}\neq \sum_{i=1}^{n}m_{i,\pi'(i)}.$$
Therefore, $\sum_{i=1}^{n}m_{i,\pi(i)}\neq 0$ or $\sum_{i=1}^{n}m_{i,\pi'(i)}\neq 0$ for any $\pi\in Y_1$.
Using the equality $$|Y_1|=|Y_2|=(n-h-2)!,$$ we obtain
$$g_M(Y_1\cup Y_2)\ge (n-h-2)!.$$
\end{proof}

\begin{lemma}\label{Lemma2}
Let $M$ be a real $n\times n$ matrix, $\alpha,\beta\in \{1,\ldots,n\}$ and $\alpha\neq \beta$. Suppose $R_{\alpha}$ and $R_{\beta}$ have the $(A_1,\ldots,A_t)$-property. Then
$$g_M(n)\geq \left(\sum_{1\leq k<m\leq t}|A_k|\cdot|A_m|\right)\cdot(n-2)!.$$
\end{lemma}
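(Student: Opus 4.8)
The plan is to decompose the index by the unordered image pair $\{\pi(\alpha),\pi(\beta)\}$ and to feed each ``cross-part'' pair into Lemma~\ref{Lemma1} with no fixed positions, i.e.\ with $h=0$. First I would reinterpret the right-hand side combinatorially: the quantity $\sum_{1\le k<m\le t}|A_k|\cdot|A_m|$ is exactly the number of unordered pairs $\{a,b\}\subseteq\{1,\ldots,n\}$ whose two elements lie in distinct parts of the partition $(A_1,\ldots,A_t)$, since for fixed $k<m$ the product $|A_k|\cdot|A_m|$ counts the pairs with one element in $A_k$ and one in $A_m$. Call such a pair a \emph{cross-part pair}.

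For each cross-part pair $\{a,b\}$, say $a\in A_k$ and $b\in A_m$ with $k\neq m$, I would apply Lemma~\ref{Lemma1} to the rows $R_\alpha,R_\beta$ taking $h=0$ (permissible since $0\le 0\le n-2$, and $a\neq b$, $\alpha\neq\beta$ guarantee the required two-element subsets). This gives
\[
g_M\!\left(S_{a,b}^{\alpha,\beta}\cup S_{b,a}^{\alpha,\beta}\right)\ge (n-2)!.
\]
The key step is the disjointness of these blocks: every $\pi\in S_{a,b}^{\alpha,\beta}\cup S_{b,a}^{\alpha,\beta}$ satisfies $\{\pi(\alpha),\pi(\beta)\}=\{a,b\}$, so distinct cross-part pairs $\{a,b\}\neq\{a',b'\}$ yield disjoint blocks. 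Since $g_M$ merely counts permutations with nonzero diagonal sum, it is additive over disjoint unions and $g_M(n)=g_M(Sym_n)$ dominates $g_M$ of the union of any subfamily of blocks. Summing the per-block bounds over all cross-part pairs then yields
\[
g_M(n)\ge \sum_{\{a,b\}\ \text{cross-part}} g_M\!\left(S_{a,b}^{\alpha,\beta}\cup S_{b,a}^{\alpha,\beta}\right)\ge \left(\sum_{1\le k<m\le t}|A_k|\cdot|A_m|\right)(n-2)!,
\]
which is the claimed inequality.

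I do not expect a serious obstacle here: once Lemma~\ref{Lemma1} is available, the argument is disjointness-plus-summation bookkeeping. The only point requiring care is confirming that the blocks $S_{a,b}^{\alpha,\beta}\cup S_{b,a}^{\alpha,\beta}$ are pairwise disjoint and that their union, indexed over cross-part pairs, is a genuine subset of $Sym_n$, so that the individual lower bounds may be added without any double counting.
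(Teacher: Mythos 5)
Your proposal is correct and follows essentially the same route as the paper: apply Lemma~\ref{Lemma1} with $h=0$ to each cross-part pair $\{a,b\}$ and sum the bounds over the pairwise disjoint blocks $S_{a,b}^{\alpha,\beta}\cup S_{b,a}^{\alpha,\beta}$. You even make explicit the disjointness justification (via $\{\pi(\alpha),\pi(\beta)\}=\{a,b\}$) that the paper leaves implicit when it writes $g_M(X)$ as a sum.
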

\begin{proof}
Let $k,m\in{\{1,\ldots,t\}}$ and $k<m$. Let us consider arbitrary $a\in{A_k}$ and $b\in{A_m}$. Lemma \ref{Lemma1} implies that
$g_M(S_{a,b}^{\alpha,\beta}\cup S_{b,a}^{\alpha,\beta})\geq (n-2)!$.
Denote $$X=\bigcup_{\substack{1\leq k<m\leq t\\ a\in{A_k},b\in{A_m}}}\left(S_{a,b}^{\alpha,\beta}\cup S_{b,a}^{\alpha,\beta}\right).$$
Then we have $$g_M(X)=\sum_{\substack{1\leq k<m\leq t\\ a\in{A_k},b\in{A_m}}}g_M(S_{a,b}^{\alpha,\beta}\cup S_{b,a}^{\alpha,\beta})\geq
\left(\sum_{1\leq k<m\leq t}|A_k|\cdot|A_m|\right)\cdot(n-2)!.$$
Thus, we obtain $$g_M(n)\geq g_M(X)\geq \left(\sum_{1\leq k<m\leq t}|A_k|\cdot|A_m|\right)\cdot(n-2)!.$$
\end{proof}

\begin{lemma}\label{Arifmetica}
Let $n=n_1+\ldots+n_t$, where $n_i\in{\mathbb{N}}$ for any $i\in{\{1,\ldots,t\}}$, $n_1\geq\ldots\geq n_t$, $n\geq 7$ and $t\geq 3$.
Then either $$\sum_{1\leq k<m\leq t}n_{k}n_{m}>2(n-1)$$ or
\begin{center}
$t=3$, $n_1=n-2$ and $n_2=n_3=1$.
\end{center}
\end{lemma}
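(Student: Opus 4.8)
The plan is to rephrase the claim entirely in terms of the quantity $e_2 := \sum_{1\le k<m\le t} n_k n_m$ and to exploit the identity $n^2 = \left(\sum_{i=1}^t n_i\right)^2 = \sum_{i=1}^t n_i^2 + 2e_2$. This turns the desired inequality $e_2 > 2(n-1)$ into the equivalent statement $\sum_{i=1}^t n_i^2 < (n-2)^2$. Thus it suffices to bound $\sum_i n_i^2$ from above and to isolate exactly those partitions for which this bound fails.

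First I would record the extremal behaviour of $\sum_i n_i^2$. For a fixed number of parts $t$, replacing two parts $a\ge b\ge 2$ by $a+1$ and $b-1$ keeps the total sum and the number of parts unchanged, keeps all parts positive, and strictly increases $\sum_i n_i^2$ by $2(a-b+1)>0$. Iterating this mass-shifting move (always feeding the largest part) shows that, among all partitions of $n$ into $t$ positive parts, $\sum_i n_i^2$ is maximized uniquely at the most skewed partition $(n-t+1,1,\ldots,1)$, whose value is $(n-t+1)^2+(t-1)$. Equivalently, $\sum_i n_i^2$ is monotone along the majorization order, so comparisons between two specific partitions reduce to checking majorization.

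Next I would split on $t$. For $t\ge 4$ I claim the maximal value already lies below the threshold: writing $s=t-1\in[3,n-1]$, the difference $(n-t+1)^2+(t-1)-(n-2)^2$ equals the upward parabola $s^2+(1-2n)s+(4n-4)$, which takes the values $8-2n$ at $s=3$ and $-(n-1)(n-4)$ at $s=n-1$, both negative for $n\ge 7$; since an upward parabola negative at two points is negative on the whole interval between them, every partition with $t\ge 4$ satisfies $\sum_i n_i^2<(n-2)^2$, hence $e_2>2(n-1)$. For $t=3$ the maximum $(n-2)^2+2$ is attained only at $(n-2,1,1)$, which is precisely the exceptional partition (and there $e_2=2n-3\le 2(n-1)$, confirming the exception is genuine). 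Every other $3$-part partition is majorized by $(n-3,2,1)$, for which $\sum_i n_i^2=(n-3)^2+5$; since $(n-3)^2+5-(n-2)^2=2(5-n)<0$ for $n\ge 6$, all non-exceptional $3$-part partitions give $\sum_i n_i^2<(n-2)^2$, i.e. $e_2>2(n-1)$.

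The delicate point, and the only place the hypothesis $n\ge 7$ is genuinely forced, is the $t=3$ analysis, where the exceptional partition sits exactly on the boundary. Here one must verify both that $(n-2,1,1)$ is the unique $3$-part partition violating the strict inequality and that the next partition in the majorization order, $(n-3,2,1)$, already satisfies it; the comparison $2(5-n)<0$ is what pins down the cutoff. The remaining steps are routine arithmetic, so I expect no further obstacle once the reduction to $\sum_i n_i^2<(n-2)^2$ and the extremal characterization of $\sum_i n_i^2$ are in place.
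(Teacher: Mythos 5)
Your proof is correct, but it follows a genuinely different route from the paper's. The paper never leaves the elementary symmetric sum: it splits on the size of the largest part, using the one-line bound $S\ge n_1(n-n_1)+n_2n_3\ge 3(n-3)+1$ when $3\le n_1\le n-3$, noting that $n_1\ge n-2$ together with $t\ge 3$ forces $(n-2,1,1)$, and handling $n_1\le 2$ by exhibiting either two parts equal to $2$ (giving $S\ge 4n-12$) or three parts equal to $1$ (giving $S\ge 3n-6$). You instead dualize through the identity $n^2=\sum_i n_i^2+2\sum_{k<m}n_kn_m$, reducing the claim to $\sum_i n_i^2<(n-2)^2$, and control the power sum by unit transfers and majorization, splitting on the number of parts $t$ rather than on $n_1$; all your computations check out, including the endpoint values $8-2n$ and $-(n-1)(n-4)$ of the parabola for $t\ge 4$, the fact that every non-exceptional $3$-part partition is majorized by $(n-3,2,1)$, and the comparison $(n-3)^2+5-(n-2)^2=2(5-n)$. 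The paper's route buys brevity and complete self-containedness (three product bounds, no identities or convexity input). Yours buys structure and sharpness: the exceptional partition appears conceptually as the unique maximizer of $\sum_i n_i^2$, and your argument actually proves the lemma for all $n\ge 6$ (the genuine obstruction being $n=5$, where $(2,2,1)$ gives $\sum_{k<m} n_kn_m=8=2(n-1)$), whereas the paper's first case needs $3(n-3)+1>2(n-1)$, which is exactly what forces its hypothesis $n\ge 7$. One small caveat: your claim that $\sum_i n_i^2$ is ``monotone along the majorization order'' (Schur convexity, or Karamata's inequality) is stronger than the maximization statement your transfer move directly yields; either cite that standard fact, or observe that the same transfers, stopped at $(n-3,2,1)$ rather than carried to the end, drive any non-exceptional $3$-part partition up to $(n-3,2,1)$ while strictly increasing $\sum_i n_i^2$ --- a presentational point, not a mathematical gap.
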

\begin{proof}
 Denote $$S=\sum_{1\leq k<m\leq t}n_{k}n_{m}.$$ We consider three cases.

 In the first case we suppose that $3\leq n_1 \leq n-3$.
 Then we have $$S\geq n_{1}(n_2+\ldots+n_t)+n_{2}n_{3}\ge 3(n-3)+1>2(n-1).$$

 In the second case we suppose that $n_1\geq n-2$. Since $t\geq 3$, it is possible only if $t=3$, $n_1=n-2$ and $n_2=n_3=1$.

 In the third case we suppose that $n_1\leq 2$.
 Let $i=|\{1\le k\le t~|~n_k=2\}|$.

 If $i\geq 2$, then $n_1=n_2=2$ and $$S\geq n_1(n_2+\ldots+n_t)+n_2(n_3+\ldots+n_t)=4n-12>2(n-1).$$
 If $n-i\geq 3$, then $n_t=n_{t-1}=n_{t-2}=1$ and $$S\geq n_{t}(n_1+\ldots+n_{t-1})+n_{t-1}(n_1+\ldots+n_{t-2})+n_{t-2}(n_1+\ldots+n_{t-3})=3n-6>2(n-1).$$
Since $n\geq 7$, $i\geq 2$ or $n-i\geq 3$ and we obtain the case considered above.
\end{proof}

Let $M=(m_{i,j})$ be a real $n\times n$ matrix and $\alpha\in \{1,\ldots,n\}$.
Let $x,y\in \mathbb{R}$ and $r_1,r_2,s\in \{2,\ldots,n\}$, where $x,y\neq 0$ and $r_1\neq r_2$.
We say that $R_{\alpha}$ is the {\em $(x,r_1,r_2)$-row} if $m_{\alpha,r_1}=x$, $m_{\alpha,r_2}=-x$ and $m_{\alpha,j}=0$ for any $j\in \{1,\ldots,n\}\setminus \{r_1,r_2\}$.
We say that $R_{\alpha}$ is the {\em $(y,s)$-row} if $m_{\alpha,s}=(n-2)y$, $m_{\alpha,1}=0$ and $m_{\alpha,j}=-y$ for any $j\in \{1,\ldots,n\}\setminus \{1,s\}$.

\begin{example}\label{3}
Let $$M=
\begin{pmatrix}
0 & 0 & 0 & 0\\
0 & 1 & 0 & -1\\
0 & 0 & 0 & 0\\
0 & -2 & 4 & -2
\end{pmatrix}.$$
Then $R_2$ is the $(1,2,4)$-row and $R_4$ is the $(2,3)$-row.
\end{example}

\begin{lemma}\label{stringtype}
Let $M$ be a special $n\times n$ matrix, $g_M(n)\leq 2(n-1)!$, $n\geq 7$ and $\alpha\in \{1,\ldots,n\}$. Suppose $R_{\alpha}$ is a non-zero row. Then $R_{\alpha}$ is the $(x,r_1,r_2)$-row or the $(y,s)$-row.
\end{lemma}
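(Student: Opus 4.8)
The plan is to pair the nonzero row $R_{\alpha}$ with the identically zero first row $R_1$ (condition 2 gives $m_{1,j}=0$ for all $j$, so $R_1\equiv 0$ and in particular $\alpha\neq 1$). First I would partition $\{1,\ldots,n\}$ by the distinct values of $R_{\alpha}$: let $v_1,\ldots,v_t$ be these values and put $A_i=\{j : m_{\alpha,j}=v_i\}$, $n_i=|A_i|$, ordered so $n_1\geq\cdots\geq n_t$. With this partition the $(A_1,\ldots,A_t)$-property of $R_1,R_{\alpha}$ is automatic: since $m_{1,a}=m_{1,b}=0$, the required inequality $m_{1,a}+m_{\alpha,b}\neq m_{1,b}+m_{\alpha,a}$ collapses to $m_{\alpha,b}\neq m_{\alpha,a}$, which holds whenever $a\in A_k$, $b\in A_m$ with $k\neq m$ precisely because $v_k\neq v_m$. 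This reduction to a by-value partition is the crux of the argument.

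Next I would invoke Lemma \ref{Lemma2} to get
$$g_M(n)\geq\Big(\sum_{1\leq k<m\leq t}n_kn_m\Big)(n-2)!,$$
and combine it with the hypothesis $g_M(n)\leq 2(n-1)!=2(n-1)(n-2)!$ to deduce $\sum_{1\leq k<m\leq t}n_kn_m\leq 2(n-1)$. I would then pin down $t$. Because $R_{\alpha}$ is nonzero while $m_{\alpha,1}=0$ (condition 1), the row contains both $0$ and a nonzero value, so $t\geq 2$; and $t=2$ is impossible, since then the values would be $0$ (multiplicity $n_1$) and some $c\neq 0$ (multiplicity $n_2$), forcing the row sum to equal $n_2c\neq 0$ and violating condition 3. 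Hence $t\geq 3$, and as $n\geq 7$ Lemma \ref{Arifmetica} applies to $n=n_1+\cdots+n_t$; the bound $\sum n_kn_m\leq 2(n-1)$ rules out its strict-inequality alternative, leaving $t=3$, $n_1=n-2$, $n_2=n_3=1$.

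Finally, $R_{\alpha}$ takes exactly three distinct values, one of multiplicity $n-2$ and two of multiplicity $1$; since $0$ is one of them (in column $1$) and $n-2\geq 5$, the split on the multiplicity-$(n-2)$ value is clean. If that value is $0$, the two single nonzero entries occupy columns $r_1,r_2\in\{2,\ldots,n\}$ and sum to $0$ by condition 3, so writing $x$ for one of them exhibits $R_{\alpha}$ as the $(x,r_1,r_2)$-row. If instead it is some $-y\neq 0$, then $0$ has multiplicity $1$ and sits only in column $1$, while the last single entry $w$ lies in some $s\in\{2,\ldots,n\}$; condition 3 gives $w-(n-2)y=0$, i.e. $w=(n-2)y$, which is exactly the $(y,s)$-row. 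The work here is bookkeeping rather than calculation: once the by-value partition is set up, Lemma \ref{Lemma2}, Lemma \ref{Arifmetica}, and the two structural conditions of a special matrix do all the heavy lifting, and the only care needed is to confirm that $y\neq 0$ and $n\geq 7$ keep the three values pairwise distinct so that the two row types are well defined.
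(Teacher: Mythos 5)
Your proof is correct and follows essentially the same route as the paper's: pairing $R_{\alpha}$ with the zero row $R_1$, partitioning columns by the values of $R_{\alpha}$ to get the $(A_1,\ldots,A_t)$-property, then combining Lemma \ref{Lemma2} with Lemma \ref{Arifmetica} to force $t=3$, $n_1=n-2$, $n_2=n_3=1$. You simply spell out in more detail two steps the paper compresses into ``by the definition of special matrix'' --- namely ruling out $t\le 2$ and the final case split on whether the multiplicity-$(n-2)$ value is zero --- and both are handled correctly.
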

\begin{proof}
Suppose that $R_{\alpha}$ consists of the distinct elements $z_1,\ldots,z_t$ each of them $z_k$, where $1\le k \le t$, occurs $n_k$ times in $R_{\alpha}$. Without loss of generality, we assume that $n_1\geq\ldots\geq n_t$.
If $t\leq 2$, then by the definition of special matrix we obtain that all elements of $R_{\alpha}$ are zeroes.
So, we can assume that $t\geq 3$.

For $k\in{\{1,\ldots,t\}}$ denote $$A_k=\{j\in{\{1,\ldots,n\}}~|~m_{\alpha,j}=z_k\}.$$
We note that $|A_k|=n_k$ for any $k\in{\{1,\ldots,t\}}$.
Since $M$ is special, all elements of $R_1$ are zeroes. Hence $R_1$ and $R_{\alpha}$ have $(A_1,\ldots,A_t)$-property.
Lemma \ref{Lemma2} implies that $$g_M(n)\geq \left(\sum_{1\leq k<m\leq t}n_{k}n_{m}\right)\cdot(n-2)!.$$
On the other hand, we have $g_M(n)\leq 2(n-1)!$. So $$\sum_{1\leq k<m\leq t}n_{k}n_{m}\leq 2(n-1).$$
Recall that in the beginning of the proof we
assumed $t\geq 3$. Then by Lemma \ref{Arifmetica} we obtain that $t=3$, $n_1=n-2$ and $n_2=n_3=1$.
Therefore, by the definition of special matrix we obtain that $R_{\alpha}$ is the $(x,r_1,r_2)$-row or the $(y,s)$-row.
\end{proof}

\begin{lemma}\label{(A,B)Property}
 Let $M$ be a special $n\times n$ matrix and $g_M(n)\leq 2(n-1)!$, where $n\ge 8$. Suppose $\alpha,\beta\in \{1,\ldots,n\}$, $\alpha\neq \beta$ and $R_{\alpha}\neq R_{\beta}$.
Then there exists a partition $(A,B)$ of $\{1,\ldots,n\}$ such that $R_{\alpha}$ and $R_{\beta}$ have the $(A,B)$-property and $|A|=2$.
\end{lemma}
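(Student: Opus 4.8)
The plan is to reduce everything to a single difference row $D$ with entries $d_j = m_{\alpha,j} - m_{\beta,j}$, and to read off the partition from the values occurring in $D$. First I would record two consequences of $M$ being special: since the first column vanishes, $d_1 = m_{\alpha,1} - m_{\beta,1} = 0$, and since every row sums to zero, $\sum_{j=1}^n d_j = 0$. The reason $D$ is the right object is that, after rearranging the defining inequality of the property, the condition $m_{\alpha,a} + m_{\beta,b} \ne m_{\alpha,b} + m_{\beta,a}$ is precisely $d_a \ne d_b$. Hence, if I let $z_1,\ldots,z_s$ be the distinct values occurring in $D$, let $n_k$ be the number of indices $j$ with $d_j = z_k$, and set $A_k = \{\,j : d_j = z_k\,\}$, then the partition $(A_1,\ldots,A_s)$ automatically gives $R_{\alpha}$ and $R_{\beta}$ the $(A_1,\ldots,A_s)$-property, because indices in different blocks carry different $d$-values.

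The crucial preliminary observation is that $s \ge 3$. Indeed, $s = 1$ would force every $d_j$ to equal $d_1 = 0$, i.e. $R_{\alpha} = R_{\beta}$, contrary to hypothesis. If $s = 2$, then since $d_1 = 0$ one value must be $0$, say $z_2 = 0$, while the other value $z_1 \ne 0$ has multiplicity $n_1 \ge 1$; but then $0 = \sum_j d_j = n_1 z_1$ forces $z_1 = 0$, a contradiction. So $s \ge 3$, which is exactly what makes Lemma \ref{Arifmetica} applicable. This is the one genuinely new step, and the place where specialness (especially $d_1 = 0$, forcing $0$ to be among the values of $D$) does the real work.

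Next I would feed the value-partition into Lemma \ref{Lemma2} to obtain $g_M(n) \ge \big(\sum_{1 \le k < m \le s} n_k n_m\big)(n-2)!$. Combining this with the standing assumption $g_M(n) \le 2(n-1)! = 2(n-1)(n-2)!$ yields $\sum_{k<m} n_k n_m \le 2(n-1)$. Since $s \ge 3$ and $n \ge 8 > 7$, Lemma \ref{Arifmetica} applies to the decomposition $n = n_1 + \cdots + n_s$; as the sum does not exceed $2(n-1)$, the only surviving alternative is $s = 3$, $n_1 = n-2$, $n_2 = n_3 = 1$.

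Finally I would build the partition. Let $p$ and $q$ be the indices realizing the singleton blocks $A_2 = \{p\}$ and $A_3 = \{q\}$, and set $A = \{p,q\}$ and $B = \{1,\ldots,n\} \setminus \{p,q\} = A_1$. Then $|A| = 2$, and for every $a \in A$, $b \in B$ we have $d_a \in \{z_2, z_3\}$ while $d_b = z_1$; as $z_1, z_2, z_3$ are pairwise distinct, $d_a \ne d_b$, which is exactly the $(A,B)$-property. I do not expect a serious obstacle here: the substantive combinatorics is packaged in Lemma \ref{Arifmetica}, and once $s \le 2$ is ruled out, the forced profile $(n-2,1,1)$ delivers the size-two block immediately. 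The main point to handle with care is the $s = 2$ collapse, which relies on $0$ necessarily appearing among the values of $D$.
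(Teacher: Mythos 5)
Your proof is correct, but it takes a genuinely different route from the paper's. The paper first invokes Lemma \ref{stringtype} to classify each of $R_{\alpha}$, $R_{\beta}$ as a zero row, an $(x,r_1,r_2)$-row or a $(y,s)$-row, then runs a four-case analysis (with subcases) to exhibit a partition $(A,B)$ with $|A|\in\{2,3,4\}$, and finally uses Lemma \ref{Lemma2} together with $g_M(n)\le 2(n-1)!$ to exclude $|A|\in\{3,4\}$; this last exclusion, $|A|\cdot(n-|A|)> 2(n-1)$, is exactly where the hypothesis $n\ge 8$ enters. You instead apply to the difference row $d_j=m_{\alpha,j}-m_{\beta,j}$ the same technique the paper uses inside the proof of Lemma \ref{stringtype} for a single row paired with the zero row $R_1$: the $(A_1,\ldots,A_t)$-property between $R_{\alpha}$ and $R_{\beta}$ is precisely the statement that the blocks separate the values of $d$, specialness gives $d_1=0$ and $\sum_{j=1}^n d_j=0$, which rules out $t\le 2$, and then Lemma \ref{Lemma2} combined with Lemma \ref{Arifmetica} forces the value profile $(n-2,1,1)$, whose two singleton level sets form the desired $A$ (merging them is harmless, since the property only constrains pairs in \emph{different} blocks). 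One piece of bookkeeping: Lemma \ref{Arifmetica} assumes the multiplicities are sorted in decreasing order, so relabel before citing it. Your route avoids Lemma \ref{stringtype} and all casework, and it in fact proves the statement already for $n\ge 7$, since Lemma \ref{Arifmetica} only needs $n\ge 7$, whereas the paper's exclusion of $|A|=3$ breaks down at $n=7$ (there $3\cdot 4=2\cdot 6$). What the paper's longer route buys is explicit partitions expressed in terms of the row parameters and the row classification itself, which is reused later in the proof of Theorem \ref{Theorem}; for the present lemma, however, your argument is a clean unification of the paper's two-step scheme.
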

\begin{proof}
Firstly, we prove that $R_{\alpha}$ and $R_{\beta}$ have the $(A,B)$-property, where $|A|\in \{2,3,4\}$.
For a set $X\subseteq \{1,\ldots,n\}$ denote $\overline{X}=\{1,\ldots,n\}\setminus X$.
By Lemma \ref{stringtype} we have four cases for $R_{\alpha}$ and $R_{\beta}$.

\textbf{Case 1.} Suppose $R_{\alpha}$ or $R_{\beta}$ is a zero row. Without loss of generality, we assume that $R_{\alpha}$ is a zero row.
Let us consider two subcases.

\textbf{1.1.} $R_{\beta}$ is the $(x,r_1,r_2)$-row. Let $A=\{r_1,r_2\}$ and $B=\overline{A}$. Then $R_{\alpha}$ and $R_{\beta}$ have the $(A,B)$-property and $|A|=2$.

\textbf{1.2.} $R_{\beta}$ is the $(y,s)$-row. Let $A=\{1,s\}$ and $B=\overline{A}$. Then $R_{\alpha}$ and $R_{\beta}$ have the $(A,B)$-property and $|A|=2$.

\textbf{Case 2.} Suppose $R_{\alpha}$ is the $(x_1,r_1,r_2)$-row and $R_{\beta}$ is the $(x_2,r_3,r_4)$-row.
Let us consider four subcases depending on $|\{r_1,r_2\}\cap \{r_3,r_4\}|$.

\textbf{2.1.} $\{r_1,r_2\}\cap \{r_3,r_4\}=\emptyset$. Let $A=\{r_1,r_2,r_3,r_4\}$ and $B=\overline{A}$. Then $R_{\alpha}$ and $R_{\beta}$ have the $(A,B)$-property and $|A|=4$.

\textbf{2.2.} $|\{r_1,r_2\}\cap \{r_3,r_4\}|=1$ and $x_1\neq x_2$. Without loss of generality, we assume that $r_1=r_3$. Let $A=\{r_1,r_2,r_4\}$ and $B=\overline{A}$.
Then $R_{\alpha}$ and $R_{\beta}$ have the $(A,B)$-property and $|A|=3$.

\textbf{2.3.} $|\{r_1,r_2\}\cap \{r_3,r_4\}|=1$ and $x_1=x_2$. Without loss of generality, we assume that $r_1=r_3$. Let $A=\{r_2,r_4\}$ and $B=\overline{A}$.
Then $R_{\alpha}$ and $R_{\beta}$ have the $(A,B)$-property and $|A|=2$.

\textbf{2.4.} $|\{r_1,r_2\}\cap \{r_3,r_4\}|=2$. Without loss of generality, we assume that $r_1=r_3$ and $r_2=r_4$. Since $R_{\alpha}\neq R_{\beta}$, we have $x_1\neq x_2$. Let $A=\{r_1,r_2\}$ and $B=\overline{A}$.
Then $R_{\alpha}$ and $R_{\beta}$ have the $(A,B)$-property and $|A|=2$.

\textbf{Case 3.} Suppose $R_{\alpha}$ is the $(y_1,s_1)$-row and $R_{\beta}$ is the $(y_2,s_2)$-row.
Let us consider three subcases.

\textbf{3.1.} $s_1\neq s_2$ and $y_1\neq y_2$. Let $A=\{1,s_1,s_2\}$ and $B=\overline{A}$. Then $R_{\alpha}$ and $R_{\beta}$ have the $(A,B)$-property and $|A|=3$.

\textbf{3.2.} $s_1\neq s_2$ and $y_1=y_2$. Let $A=\{s_1,s_2\}$ and $B=\overline{A}$.
Then $R_{\alpha}$ and $R_{\beta}$ have the $(A,B)$-property and $|A|=2$.

\textbf{3.3.} $s_1=s_2$. Since $R_{\alpha}\neq R_{\beta}$, we have $y_1\neq y_2$. Let $A=\{1,s_1\}$ and $B=\overline{A}$.
Then $R_{\alpha}$ and $R_{\beta}$ have the $(A,B)$-property and $|A|=2$.

\textbf{Case 4.} Suppose $R_{\alpha}$ is the $(x,r_1,r_2)$-row and $R_{\beta}$ is the $(y,s)$-row (the case when $R_{\alpha}$ is  the $(y,s)$-row  and $R_{\beta}$ is the $(x,r_1,r_2)$-row is similar).
Let us consider three subcases.

\textbf{4.1.} $s\not\in \{r_1,r_2\}$. Let $A=\{1,r_1,r_2,s\}$ and $B=\overline{A}$. Then $R_{\alpha}$ and $R_{\beta}$ have the $(A,B)$-property and $|A|=4$.

\textbf{4.2.} $s\in \{r_1,r_2\}$ and $x\neq (n-1)y$. Without loss of generality, we assume that $s=r_1$. Let $A=\{1,r_1,r_2\}$ and $B=\overline{A}$.
Then $R_{\alpha}$ and $R_{\beta}$ have the $(A,B)$-property and $|A|=3$.

\textbf{4.3.} $s\in \{r_1,r_2\}$ and $x=(n-1)y$. Without loss of generality, we assume that $s=r_1$. Let $A=\{1,r_2\}$ and $B=\overline{A}$.
Then $R_{\alpha}$ and $R_{\beta}$ have the $(A,B)$-property and $|A|=2$.

Thus, we prove that $R_{\alpha}$ and $R_{\beta}$ have the $(A,B)$-property, where $|A|\in \{2,3,4\}$.
Hence by Lemma \ref{Lemma2} we obtain that $g_M(n)\ge |A|\cdot|B|\cdot(n-2)!$.
On the other hand, we have $g_M(n)\le 2(n-1)!$. Therefore $|A|\cdot|B|\le 2(n-1)$.
So, $|A|=2$ and the cases when $|A|\in \{3,4\}$ do not hold. The lemma is proved.
\end{proof}

Let $M$ be an $n\times n$ matrix and $\theta\in \{1,\ldots,n\}$. We say that $M$ is {\em $\theta$-uniform} if $R_{\alpha}=R_{\beta}$ for any $\alpha,\beta\in \{1,\ldots,n\}\setminus \theta$.

\begin{lemma}\label{RavnomernostMatrix}
Let $M$ be a special $n\times n$ matrix and $g_M(n)\leq 2(n-1)!$, where $n\geq 8$ or $n=3$. Then $M$ is $\theta$-uniform for some $\theta\in{\{1,\ldots,n\}}$.
\end{lemma}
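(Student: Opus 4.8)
The plan is to argue by contradiction: I assume $g_M(n)\le 2(n-1)!$ while $M$ is \emph{not} $\theta$-uniform for any $\theta$, and I derive $g_M(n)>2(n-1)!$. The value $n=3$ must be treated separately, since the structural lemmas above require $n\ge 8$. For $n=3$ a special matrix has $R_1=0$, $R_2=(0,a,-a)$ and $R_3=(0,b,-b)$, and the six diagonal sums are $a-b,\ b-a,\ a,\ -a,\ b,\ -b$; the constraint $g_M(3)\le 2\cdot 2!=4$ forces $a=b$ or $ab=0$, and in every such case one checks directly that $M$ is $\theta$-uniform. So from now on $n\ge 8$.

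The first step is to translate ``not $\theta$-uniform'' into the existence of two disjoint unequal pairs of rows. Consider the graph $G$ on $\{1,\dots,n\}$ with an edge $\{\alpha,\beta\}$ whenever $R_\alpha\neq R_\beta$. By definition $M$ is $\theta$-uniform exactly when every edge of $G$ is incident to $\theta$, so ``$M$ is $\theta$-uniform for some $\theta$'' is the same as ``all edges of $G$ pass through one vertex''. Since $M$ is special, $R_1=0$ and $M\neq 0$, so $G$ has at least one edge. A graph with no two independent edges is a star or a triangle, and a triangle is impossible here: for $n\ge 4$ it would leave an isolated vertex $d$, forcing $R_d$ to coincide with three pairwise distinct rows. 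Hence, if $M$ is not $\theta$-uniform, $G$ has a matching of size two, i.e.\ there are four distinct indices $\alpha_1,\beta_1,\alpha_2,\beta_2$ with $R_{\alpha_1}\neq R_{\beta_1}$ and $R_{\alpha_2}\neq R_{\beta_2}$.

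Now I combine the two pairs. Lemma~\ref{(A,B)Property} gives partitions $(A_1,B_1)$ and $(A_2,B_2)$ of $\{1,\dots,n\}$ with $|A_1|=|A_2|=2$ such that $R_{\alpha_i},R_{\beta_i}$ have the $(A_i,B_i)$-property. Call $\pi$ an \emph{$i$-split} if $\{\pi(\alpha_i),\pi(\beta_i)\}$ meets both $A_i$ and $B_i$, and let $X^{(i)}$ be the set of all $i$-splits. The $(A_i,B_i)$-property says precisely that exchanging the values of an $i$-split at positions $\alpha_i,\beta_i$ changes $\sum_k m_{k,\pi(k)}$. Define a map $\tau$ on $X^{(1)}\cup X^{(2)}$ by exchanging the values at $\alpha_1,\beta_1$ if $\pi$ is a $1$-split and at $\alpha_2,\beta_2$ otherwise. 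Because the index pairs are disjoint, $\tau$ is a fixed-point-free involution of $X^{(1)}\cup X^{(2)}$ under which $\sum_k m_{k,\pi(k)}$ always changes, so each $\tau$-orbit contains a permutation with nonzero diagonal sum. Therefore
$$g_M(n)\ \ge\ g_M\!\left(X^{(1)}\cup X^{(2)}\right)\ \ge\ \tfrac12\,\bigl|X^{(1)}\cup X^{(2)}\bigr|\ =\ \tfrac12\bigl(|X^{(1)}|+|X^{(2)}|-|W|\bigr),$$
where $W=X^{(1)}\cap X^{(2)}$.

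Finally I count. A routine computation gives $|X^{(1)}|=|X^{(2)}|=2|A_i|\,|B_i|\,(n-2)!=4(n-2)\,(n-2)!$, while fixing the four values at $\alpha_1,\beta_1,\alpha_2,\beta_2$ yields $|W|\le\bigl(2|A_1||B_1|\bigr)\bigl(2|A_2||B_2|\bigr)(n-4)!=16(n-2)^2(n-4)!$. Plugging these into the displayed bound and comparing with $2(n-1)!$, the desired strict inequality reduces to $n^2-10n+17>0$, which holds precisely for $n\ge 8$; this contradicts $g_M(n)\le 2(n-1)!$. The heart of the argument — and the reason for the threshold $n\ge 8$ — is exactly this quantitative step: the best lower bound obtainable from a \emph{single} unequal pair is $(2n-3)(n-2)!$ (take the three-part level-set partition of $R_{\alpha}-R_{\beta}$ in Lemma~\ref{Lemma2}), and $(2n-3)(n-2)!<2(n-1)!$, so one must exploit both pairs simultaneously, and the estimate for $|W|$ must be sharp enough to survive the boundary case $n=8$.
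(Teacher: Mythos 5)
Your proof is correct, and its skeleton matches the paper's: dispose of $n=3$ by hand, then for $n\ge 8$ produce two unequal pairs of rows on four distinct indices, apply Lemma~\ref{(A,B)Property} to each, and beat $2(n-1)!$ by swap-counting. The difference lies in how the two pairs are combined, and it is a genuine one. The paper engineers \emph{disjointness}: its $X_1$ is the set of permutations split across $(A,B)$ at positions $(\alpha,\beta)$, while its $X_2$ consists of permutations sending both $\alpha,\beta$ into $B\setminus\{c,d\}$ and split across $(C,D)$ at $(\gamma,\delta)$; then $g_M(X_1\cup X_2)=g_M(X_1)+g_M(X_2)$, Lemma~\ref{Lemma1} handles each piece, and the resulting inequality $2(n-2)(n-2)!+2(n-2)(n-4)(n-5)(n-4)!>2(n-1)!$ reduces to $(n-4)(n-5)>n-3$, which already holds for $n\ge 7$. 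You instead take the union of the two \emph{full} split families, control the overlap $W$ by inclusion--exclusion, and pair permutations with a single fixed-point-free involution $\tau$ (well defined precisely because the index pairs are disjoint, as you note). This is conceptually cleaner --- no ad hoc restriction of the second family --- but the crude bound $|W|\le 16(n-2)^2(n-4)!$ costs you something: your final inequality $n^2-10n+17>0$ is tight exactly at $n=8$. Both versions cover the stated range, since Lemma~\ref{(A,B)Property} requires $n\ge 8$ anyway; but your closing claim that the threshold $n\ge 8$ is forced by this counting step is accurate only for your own proof, not for the paper's, where the counting survives $n=7$ and the obstruction sits in Lemma~\ref{(A,B)Property} (at $n=7$ the case $|A|=3$, $|A|\cdot|B|=3(n-3)\le 2(n-1)$, cannot be excluded). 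One point where your write-up is actually more complete than the paper's: via the star/triangle (matching) argument you \emph{prove} that failure of $\theta$-uniformity for every $\theta$ yields two unequal pairs on four distinct indices, whereas the paper merely asserts the existence of $R_\alpha\neq R_\beta$ and $R_\gamma\neq R_\delta$, even though its later use of $S_{b',b'',c,d}^{\alpha,\beta,\gamma,\delta}$ tacitly needs $\alpha,\beta,\gamma,\delta$ pairwise distinct.
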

\begin{proof}
Firstly, let us consider the case $n=3$. Since $M$ is special, any non-zero row of $M$ is the $(x,2,3)$-row. Hence either $M$ is $\theta$-uniform for some $\theta\in{\{1,2,3\}}$ or $R_2$ and $R_3$ are $(x_1,2,3)$-row and $(x_2,2,3)$-row, where $x_1\neq x_2$. In the last case we have $g_M(n)=6$
and we obtain a contradiction with $g_M(n)\leq 2(n-1)!$.
In what follows, in this lemma we assume that $n\ge 8$.

Suppose that $M$ is not $\theta$-uniform for any $\theta\in{\{1,\ldots,n\}}$. Then there exist four rows $R_{\alpha}$, $R_{\beta}$, $R_{\gamma}$ and $R_{\delta}$ such that $R_{\alpha}\neq R_{\beta}$ and $R_{\gamma}\neq R_{\delta}$.

Since $R_{\alpha}\neq R_{\beta}$, by Lemma \ref{(A,B)Property} we obtain that $R_{\alpha}$ and $R_{\beta}$ have $(A,B)$-property, where $|A|=2$.
Let $a\in{A}$ and $b\in{B}$. Lemma \ref{Lemma1} implies that $g_M(S_{a,b}^{\alpha,\beta}\cup S_{b,a}^{\alpha,\beta})\geq (n-2)!$.
Denote $$X_1=\bigcup_{a\in{A},b\in{B}}(S_{a,b}^{\alpha,\beta}\cup S_{b,a}^{\alpha,\beta}).$$
Then we have

\begin{equation}\label{Neravenstvo1}
g_M(X_1)=\sum_{a\in{A},b\in{B}}g_M(S_{a,b}^{\alpha,\beta}\cup S_{b,a}^{\alpha,\beta})\geq |A|\cdot|B|\cdot (n-2)!=2(n-2)(n-2)!
\end{equation}

Since $R_{\gamma}\neq R_{\delta}$, by Lemma \ref{(A,B)Property} we obtain that $R_{\gamma}$ and $R_{\delta}$ have $(C,D)$-property, where $|C|=2$.
Let $c\in{C}$ and $d\in{D}$. Let us consider arbitrary $b',b''\in{B\setminus\{c,d\}}$, where $b'\neq b''$.
Lemma \ref{Lemma1} implies that $$g_M\left(S_{b',b'',c,d}^{\alpha,\beta,\gamma,\delta}\cup S_{b',b'',d,c}^{\alpha,\beta,\gamma,\delta}\right)\geq (n-4)!.$$
Denote $$X_2=\bigcup_{\substack{c\in{C},d\in{D}\\b',b''\in{B\setminus\{c,d\}}, b'\neq b''}}\left(S_{b',b'',c,d}^{\alpha,\beta,\gamma,\delta}\cup S_{b',b'',d,c}^{\alpha,\beta,\gamma,\delta}\right).$$
Then we have
\begin{equation}\label{Neravenstvo2}
\begin{split}
&g_M(X_2)=\sum_{\substack{c\in{C},d\in{D}\\b',b''\in{B\setminus\{c,d\}}, b'\neq b''}}g_M\left(S_{b',b'',c,d}^{\alpha,\beta,\gamma,\delta}\cup S_{b',b'',d,c}^{\alpha,\beta,\gamma,\delta}\right)\geq\\
&\geq|C|\cdot |D|\cdot (|B|-2)\cdot(|B|-3)\cdot (n-4)!=\\
&=2(n-2)(n-4)(n-5)(n-4)!.
\end{split}
\end{equation}

Note that $X_1\cap X_2=\emptyset$. Therefore, $g_M(X_1\cup X_2)=g_M(X_1)+g_M(X_2)$. Using (\ref{Neravenstvo1}) and (\ref{Neravenstvo2}), we obtain that  $g_M(X_1\cup X_2)>2(n-1)!$. Thus, $g_M(n)\geq g_M(X_1\cup X_2)>2(n-1)!$ and we obtain a contradiction with $g_M(n)\leq 2(n-1)!$.
\end{proof}

Let $x\in \mathbb{R}$, $x\neq 0$ and $p_1,p_2\in \{2,\ldots,n\}$, $p_1\neq p_2$.
We say that an $n\times n$ matrix $M=(m_{i,j})$ is the {\em $(x,p_1,p_2)$-matrix} if
$$
m_{i,j}=\begin{cases}
x,&\text{if $j=p_1$ and $i>1$;}\\
-x,&\text{if $j=p_2$ and $i>1$;}\\
0,&\text{otherwise.}
\end{cases}
$$

\begin{example}\label{4}
The matrix $$M=
\begin{pmatrix}
0 & 0 & 0 & 0\\
0 & 3 & 0 & -3\\
0 & 3 & 0 & -3\\
0 & 3 & 0 & -3
\end{pmatrix}$$
is the $(3,2,4)$-matrix.
\end{example}

We say that an $n\times n$ matrix $M$ belongs to the set $\mathcal{M}_1(n)$ if $M$ is the $(x,p_1,p_2)$-matrix for some $x\in \mathbb{R}$, $x\neq 0$ and $p_1,p_2\in \{2,\ldots,n\}$, $p_1\neq p_2$.

Let $x\in \mathbb{R}$, $x\neq 0$ and $q_1,q_2,\tau\in \{2,\ldots,n\}$, $q_1\neq q_2$.
We say that an $n\times n$ matrix $M$ is the {\em $(x,q_1,q_2,\tau)$-matrix} if
$$
m_{i,j}=\begin{cases}
x,&\text{if $i=\tau$ and $j=q_1$;}\\
-x,&\text{if $i=\tau$ and $j=q_2$;}\\
0,&\text{otherwise.}
\end{cases}
$$

\begin{example}\label{5}
The matrix $$M=
\begin{pmatrix}
0 & 0 & 0 & 0\\
0 & 0 & 0 & 0\\
0 & 5 & 0 & -5\\
0 & 0 & 0 & 0
\end{pmatrix}$$
is the $(5,2,4,3)$-matrix.
\end{example}

We say that an $n\times n$ matrix $M$ belongs to the set $\mathcal{M}_2(n)$ if $M$ is the $(x,q_1,q_2,\tau)$-matrix for some  $x\in \mathbb{R}$, $x\neq 0$ and $q_1,q_2,\tau\in \{2,\ldots,n\}$, $q_1\neq q_2$.

Now we prove the main theorem of this section.
\begin{theorem}\label{Theorem}
Let $M$ be a special $n\times n$ matrix, where $n\ge 8$ or $n=3$. Then $g_M(n)\ge 2(n-1)!$. Moreover, $g_{M}(n)=2(n-1)!$ if and only if $M\in \mathcal{M}_1(n)$ or $M\in \mathcal{M}_2(n)$.
\end{theorem}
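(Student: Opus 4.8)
The plan is to prove the inequality and the equality characterization together, by taking $g_M(n)\le 2(n-1)!$ as a working hypothesis and showing it forces $M\in\mathcal{M}_1(n)\cup\mathcal{M}_2(n)$; the bound and the classification then both drop out. I would first record the easy direction by direct counting. If $M\in\mathcal{M}_2(n)$ is the $(x,q_1,q_2,\tau)$-matrix, the only non-zero row is $R_\tau$, so $\sum_{i=1}^n m_{i,\pi(i)}=m_{\tau,\pi(\tau)}$ is non-zero exactly when $\pi(\tau)\in\{q_1,q_2\}$, which holds for $2(n-1)!$ permutations. If $M\in\mathcal{M}_1(n)$ is the $(x,p_1,p_2)$-matrix, then only columns $p_1,p_2$ contribute to $\sum_{i=1}^n m_{i,\pi(i)}$, through the rows $\pi^{-1}(p_1)$ and $\pi^{-1}(p_2)$; since the first row is zero, the two contributions $+x$ and $-x$ cancel unless exactly one of these rows equals $1$, i.e.\ unless $\pi(1)\in\{p_1,p_2\}$, again giving $2(n-1)!$ permutations with non-zero diagonal sum. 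This settles the ``if'' direction and pins the extremal value at $2(n-1)!$.

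For the converse, assume $g_M(n)\le 2(n-1)!$. Lemma \ref{RavnomernostMatrix} supplies a $\theta$ with $M$ being $\theta$-uniform; let $R$ be the common value of all rows other than $R_\theta$. Since the first row of a special matrix is zero, there are exactly two configurations. If $R\neq 0$, then $\theta=1$ is forced (otherwise $R=R_1=0$), so $R_2=\cdots=R_n=R$ and $R_1=0$. If $R=0$, then $M$ has a single non-zero row $R_\theta$, and $\theta\neq 1$ (otherwise $M$ would be the zero matrix). In both configurations the distinguished non-zero row is classified by Lemma \ref{stringtype} (valid since $n\ge 8$) as an $(x,r_1,r_2)$-row or a $(y,s)$-row; for $n=3$ the conditions defining a special matrix already force every non-zero row to be an $(x,2,3)$-row, so only the first type can occur.

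Matching types against the definitions finishes each branch: a common non-zero $(x,r_1,r_2)$-row on rows $2,\dots,n$ makes $M$ the $(x,r_1,r_2)$-matrix, so $M\in\mathcal{M}_1(n)$, while a single non-zero $(x,r_1,r_2)$-row at position $\theta$ makes $M$ the $(x,r_1,r_2,\theta)$-matrix, so $M\in\mathcal{M}_2(n)$. The only remaining possibility is a $(y,s)$-type distinguished row, which arises only for $n\ge 8$, and this is exactly the step I expect to be the crux. Here a direct count shows that in either configuration $\sum_{i=1}^n m_{i,\pi(i)}=0$ precisely when $\pi$ sends one fixed coordinate to $1$: when $R_\theta$ is the single non-zero row this is $\pi(\theta)=1$, and when all of $R_2,\dots,R_n$ equal the $(y,s)$-pattern the entries telescope so that the sum vanishes exactly for $\pi(1)=1$. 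Either way $(n-1)!$ permutations give sum zero, whence $g_M(n)=(n-1)(n-1)!$; since $n\ge 8$ forces $(n-1)(n-1)!>2(n-1)!$, this contradicts the hypothesis and eliminates the $(y,s)$ case. Therefore $g_M(n)\le 2(n-1)!$ implies $M\in\mathcal{M}_1(n)\cup\mathcal{M}_2(n)$, and the first paragraph then gives $g_M(n)=2(n-1)!$ for such $M$. The global lower bound follows at once: a special matrix with $g_M(n)<2(n-1)!$ would satisfy the hypothesis and hence have $g_M(n)=2(n-1)!$, a contradiction, so $g_M(n)\ge 2(n-1)!$ holds for every special $M$, with equality exactly on $\mathcal{M}_1(n)\cup\mathcal{M}_2(n)$.
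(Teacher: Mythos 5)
Your proof is correct and takes essentially the same route as the paper's: reduce to a $\theta$-uniform matrix via Lemma \ref{RavnomernostMatrix}, classify the distinguished non-zero row via Lemma \ref{stringtype}, and eliminate the $(y,s)$-row possibility by the count $g_M(n)=(n-1)\cdot(n-1)!>2(n-1)!$. You fill in details the paper leaves to the reader --- the explicit counts in the ``if'' direction, the telescoping computation for the $(y,s)$ case, and the separate handling of $n=3$ where Lemma \ref{stringtype} does not apply --- but the underlying argument is identical.
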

\begin{proof}
One can verify that if $M\in \mathcal{M}_1(n)$ or $M\in \mathcal{M}_2(n)$, then $g_{M}(n)=2(n-1)!$.

Suppose that $g_M(n)\le 2(n-1)!$. Let us prove that $M\in \mathcal{M}_1(n)$ or $M\in \mathcal{M}_2(n)$.
Lemma \ref{RavnomernostMatrix} implies that $M$ is $\theta$-uniform for some $\theta\in{\{1,\ldots,n\}}$.
We consider two cases.

\textbf{Case 1.} Suppose $\theta=1$. Then $R_2=R_3=\ldots=R_n$.
By Lemma \ref{stringtype}, $R_2$ is the $(x,r_1,r_2)$-row or the $(y,s)$-row. If $R_2$ is the $(x,r_1,r_2)$-row, then $M$ is the $(x,r_1,r_2)$-matrix.
So, in this subcase $M\in \mathcal{M}_1(n)$. If $R_2$ is the $(y,s)$-row, then $g_M(n)=(n-1)\cdot(n-1)!$ and we obtain a contradiction with $g_M(n)\le 2(n-1)!$.

\textbf{Case 2.} Suppose $\theta>1$.
By Lemma \ref{stringtype}, $R_{\theta}$ is the $(x,r_1,r_2)$-row or the $(y,s)$-row. If $R_{\theta}$ is the $(x,r_1,r_2)$-row, then $M$ is the $(x,r_1,r_2,\theta)$-matrix.
So, in this subcase $M\in \mathcal{M}_2(n)$. If $R_\theta$ is the $(y,s)$-row, then $g_M(n)=(n-1)\cdot(n-1)!$ and we obtain a contradiction with $g_M(n)\le 2(n-1)!$.

\end{proof}

\section{Main Theorem}\label{SectionMainTheorem}
In this section we prove the main theorem of this paper.
\begin{theorem}\label{MainTheorem}
Let $f$ be an $(n-2)$-eigenfunction of $S_n$, where $n\ge 8$ or $n=3$. Then $|Supp(f)|\ge 2(n-1)!$. Moreover,  $|Supp(f)|=2(n-1)!$ if and only if $f=c\cdot \tilde{f}$, where $c$ is a real non-zero constant and $\tilde{f}\in \mathcal{F}$.
\end{theorem}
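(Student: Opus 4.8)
The plan is to reduce Theorem~\ref{MainTheorem} to Theorem~\ref{Theorem} via the correspondence $f\mapsto M(f)$ established in Section~\ref{SectionM(f)}. By Lemma~\ref{Supp(f)=g_M(n)}, we have $|Supp(f)|=g_{M(f)}(n)$, and $M(f)$ is a special matrix (its entries satisfy conditions 1--3 of the definition: the first column and first row vanish by construction, and the row sums vanish because the $j=2$ entry is defined as the negative sum of the entries in columns $3,\ldots,n$). Moreover, $M(f)$ is non-zero precisely because $f\not\equiv 0$. Hence the lower bound $|Supp(f)|=g_{M(f)}(n)\ge 2(n-1)!$ is immediate from Theorem~\ref{Theorem}.

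For the equality case, the forward direction is again a direct transfer: $|Supp(f)|=2(n-1)!$ forces $g_{M(f)}(n)=2(n-1)!$, so by Theorem~\ref{Theorem} the matrix $M(f)$ lies in $\mathcal{M}_1(n)$ or $\mathcal{M}_2(n)$. The core task is then to translate these two matrix classes back into statements about $f$. First I would observe that the map $f\mapsto M(f)$ is linear and injective (it is built from the coordinates $\mu_i^j(f)$ in the basis $\mathcal{F}_2$ of Lemma~\ref{basis}), so it suffices to identify which eigenfunctions produce a given special matrix. The key computation is to determine $M(f_u^{v,w})$ for each generator $f_u^{v,w}\in\mathcal{F}$ and check, by inspecting the definition~(\ref{M(f)Definition}) against the definitions of the $(x,p_1,p_2)$- and $(x,q_1,q_2,\tau)$-matrices, that the matrices arising from $\mathcal{M}_1(n)$ correspond exactly to scalar multiples of the functions $f_u^{v,w}$ with $u=1$, while those from $\mathcal{M}_2(n)$ correspond to scalar multiples of the functions with $u>1$.

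Concretely, I expect the following correspondence. A matrix in $\mathcal{M}_2(n)$, the $(x,q_1,q_2,\tau)$-matrix, has a single non-zero row $R_\tau$ of the form $x,-x$ in columns $q_1,q_2$; since $m_{\tau,j}(f)=-\mu_\tau^j(f)$ for $j>2$ and $m_{\tau,2}(f)=\sum_s\mu_\tau^s(f)$, this pins down $f$ as a scalar multiple of a single basis function $f_\tau^{2,w}$ or, after accounting for the column-$2$ bookkeeping, of some $f_u^{v,w}$ with $u=\tau>1$. A matrix in $\mathcal{M}_1(n)$, the $(x,p_1,p_2)$-matrix, has every row (for $i>1$) equal to the same pattern $x,-x$ in columns $p_1,p_2$; this column-uniform structure is exactly what the functions $f_1^{v,w}$ (those with $u=1$) produce, since $f_1^{v,w}(\pi)$ depends on whether $\pi$ maps $v$ or $w$ to the fixed symbol $1$, a condition that is symmetric across the rows indexed by $i>1$. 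The converse direction—that each $f=c\cdot\tilde f$ with $\tilde f\in\mathcal{F}$ achieves $|Supp(f)|=2(n-1)!$—follows by running the same identification in reverse and invoking the already-verified equality statement of Theorem~\ref{Theorem}, or directly by noting $|Supp(c\cdot\tilde f)|=|Supp(\tilde f)|$ and computing the support of one representative.

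The main obstacle I anticipate is the bookkeeping in column $2$: the basis $\mathcal{F}_2$ singles out the transposition $(1\,2)$, so the matrix entry $m_{i,2}(f)$ is a \emph{derived} quantity $\sum_{s=3}^n\mu_i^s(f)$ rather than a free coordinate, and the definitions of the $(x,p_1,p_2)$- and $(x,q_1,q_2,\tau)$-matrices allow $p_1,p_2$ (resp.\ $q_1,q_2$) to equal $2$. I would therefore need to carefully check each case where one of the distinguished columns is column $2$, verifying that the constraint $\sum_{j}m_{i,j}(f)=0$ is consistent with a scalar multiple of a genuine $f_u^{v,w}\in\mathcal{F}$ (in particular allowing $v=2$), and that no spurious eigenfunction outside $\mathcal{F}$ maps into $\mathcal{M}_1(n)\cup\mathcal{M}_2(n)$. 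Once this case analysis confirms the bijection between $\mathcal{M}_1(n)\cup\mathcal{M}_2(n)$ and scalar multiples of $\mathcal{F}$, the theorem follows.
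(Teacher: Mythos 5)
Your proposal is correct, and its skeleton coincides with the paper's: both obtain the lower bound from Lemma~\ref{Supp(f)=g_M(n)} (noting $M(f)$ is special) together with Theorem~\ref{Theorem}, and both reduce the equality case to the classification $M(f)\in\mathcal{M}_1(n)\cup\mathcal{M}_2(n)$. Where you diverge is in translating that classification back into $f=c\cdot\tilde f$. The paper does this in one stroke with Lemma~\ref{f(pi)}: given that $M(f)$ is the $(x,p_1,p_2)$-matrix (resp.\ the $(x,q_1,q_2,\tau)$-matrix), the formula $f(\pi)=\sum_{i=1}^{n}m_{i,\pi^{-1}(i)}(f)$ evaluates $f$ on every permutation, and one reads off $f=x\cdot f_{1}^{p_2,p_1}$ (resp.\ $f=x\cdot f_{\tau}^{q_1,q_2}$); no basis expansion is needed, and column $2$ requires no special treatment because the zero first row kills the diagonal entries that would otherwise cause trouble. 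You instead invert the correspondence: $f\mapsto M(f)$ is linear and injective (the entries $m_{i,j}(f)=-\mu_i^j(f)$ for $j\ge 3$ recover the coordinates in the basis $\mathcal{F}_2$), so it suffices to show that every matrix in $\mathcal{M}_1(n)\cup\mathcal{M}_2(n)$ is realized by a scalar multiple of a member of $\mathcal{F}$. This works, and the correspondence you anticipate is the right one: using $f_u^{v,w}=f_u^{2,w}-f_u^{2,v}$ for $u\ge 2$, $v,w\ge 3$, and $f_1^{v,w}=\sum_{u\ge 2}\bigl(f_u^{2,v}-f_u^{2,w}\bigr)$ (with the obvious modification when $v=2$ or $w=2$), one checks that $M\bigl(x\cdot f_{\tau}^{q_1,q_2}\bigr)$ is exactly the $(x,q_1,q_2,\tau)$-matrix and $M\bigl(x\cdot f_{1}^{p_2,p_1}\bigr)$ is exactly the $(x,p_1,p_2)$-matrix, so the column-$2$ bookkeeping you flag does close, and injectivity then rules out any ``spurious'' eigenfunction. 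The trade-off: your route costs the generator computations plus the injectivity remark, while the paper's route reaches the same conclusion directly from Lemma~\ref{f(pi)}, which is the cleaner path precisely because it sidesteps expressing anything in the basis $\mathcal{F}_2$. Both routes also give the converse direction, either by your direct support count $|Supp(f_u^{v,w})|=2(n-1)!$ or through the equality statement of Theorem~\ref{Theorem}.
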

\begin{proof}
Lemma \ref{Supp(f)=g_M(n)} implies that $|Supp(f)|=g_{M(f)}(n)$.
We note that $M(f)$ is special. Then by Theorem \ref{Theorem} we obtain that $g_{M(f)}(n)\ge 2(n-1)!$. Therefore $|Supp(f)|\ge 2(n-1)!$.
Moreover, $g_{M(f)}(n)=2(n-1)!$ if and only if $M(f)\in \mathcal{M}_1(n)$ or $M(f)\in \mathcal{M}_2(n)$.
We consider two cases.

Suppose that $M(f)\in \mathcal{M}_1(n)$. Then $M(f)$ is the $(x,p_1,p_2)$-matrix.
Using Lemma \ref{f(pi)}, we have
$$
f(\pi)=\begin{cases}
x,&\text{if $\pi(p_2)=1$;}\\
-x,&\text{if $\pi(p_1)=1$;}\\
0,&\text{otherwise.}
\end{cases}
$$
So, in this case $f=x\cdot f_{1}^{p_2,p_1}$.

Suppose that $M(f)\in \mathcal{M}_2(n)$. Then $M(f)$ is the $(x,q_1,q_2,\tau)$-matrix.
Using Lemma \ref{f(pi)}, we have
$$
f(\pi)=\begin{cases}
x,&\text{if $\pi(q_1)=\tau$;}\\
-x,&\text{if $\pi(q_2)=\tau$;}\\
0,&\text{otherwise.}
\end{cases}
$$
So, in this case $f=x\cdot f_{\tau}^{q_1,q_2}$.

\end{proof}

\section{Correspondence between the extremal $(n-2)$-eigenfunctions and completely regular codes}\label{SectionCompletelyRegularCodes}
\begin{lemma}\label{CompletelyRegularCode}
Let $\alpha\in \{2,\ldots,n\}$, $a\in \{1,\ldots,n\}$ and $n\ge 3$. Then the set $S_{a}^{\alpha}$ is a completely regular code of covering radius $2$ in $S_n$.
\end{lemma}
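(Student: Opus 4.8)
The plan is to exhibit the distance partition of $Sym_n$ with respect to $C=S_{a}^{\alpha}$ explicitly and to verify directly that it is equitable. Recall that in $S_n$ two permutations are adjacent if and only if one is obtained from the other by exchanging the symbols occupying position $1$ and some position $i\in\{2,\ldots,n\}$; in particular, a single step alters the symbol standing in position $\alpha$ only through the generator $(1\,\alpha)$, which replaces $\pi(\alpha)$ by $\pi(1)$. First I would set $C^{(0)}=\{\pi\in Sym_n~|~\pi(\alpha)=a\}=S_{a}^{\alpha}$, $C^{(1)}=\{\pi\in Sym_n~|~\pi(1)=a\}$ and $C^{(2)}=\{\pi\in Sym_n~|~\pi(1)\neq a\text{ and }\pi(\alpha)\neq a\}$. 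Since $1\neq\alpha$, no permutation satisfies both $\pi(1)=a$ and $\pi(\alpha)=a$, so $(C^{(0)},C^{(1)},C^{(2)})$ is a partition of $Sym_n$ with $|C^{(0)}|=|C^{(1)}|=(n-1)!$ and $|C^{(2)}|=(n-2)(n-1)!$.

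Next I would identify $C^{(d)}$ as the set of vertices at distance $d$ from $C$. A vertex of $C^{(1)}$ reaches $C$ in one step via $(1\,\alpha)$, and no vertex outside $C$ is adjacent to $C$ unless the symbol $a$ sits in its position $1$; a vertex of $C^{(2)}$ has $a$ in some position $p\notin\{1,\alpha\}$ and reaches $C$ in two steps, by first applying $(1\,p)$ and then $(1\,\alpha)$, but in no fewer. This already shows that the covering radius equals $2$, where the existence of a position $p\notin\{1,\alpha\}$ uses $n\ge 3$.

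The core of the proof is the equitability check, i.e. verifying that the number of neighbours a vertex has in each class depends only on its own class. I would compute this by running through the $n-1$ generators and tracking the pair $(\pi(1),\pi(\alpha))$: the generator $(1\,\alpha)$ interchanges these two symbols, while each of the remaining $n-2$ generators $(1\,i)$ leaves $\pi(\alpha)$ fixed and resets $\pi(1)$ to $\pi(i)$. Carrying this out for each class yields the quotient matrix
\[
\begin{pmatrix}
n-2 & 1 & 0\\
1 & 0 & n-2\\
0 & 1 & n-2
\end{pmatrix},
\]
whose rows sum to $n-1$, as they must for an $(n-1)$-regular graph. The only genuinely delicate bookkeeping occurs in the row for $C^{(2)}$: among the $n-2$ generators $(1\,i)$ with $i\neq\alpha$, exactly one, namely the one with $i=p$ where $p=\pi^{-1}(a)$, moves $a$ into position $1$ and so sends the vertex into $C^{(1)}$, while the remaining $n-3$ keep it in $C^{(2)}$.

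Since each of these counts is independent of the particular vertex chosen in a given class, the partition $(C^{(0)},C^{(1)},C^{(2)})$ is equitable, and hence $C=S_{a}^{\alpha}$ is a completely regular code of covering radius $2$. The main obstacle is thus not conceptual but rather the careful, convention-consistent case analysis for the $C^{(2)}$ row; once the three classes are written down explicitly, everything else follows immediately from tracking $(\pi(1),\pi(\alpha))$ under the generators.
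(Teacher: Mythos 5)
Your proposal is correct and follows essentially the same route as the paper: you exhibit the distance partition $\bigl(S_{a}^{\alpha},\, S_{a}^{1},\, \bigcup_{\beta\notin\{1,\alpha\}}S_{a}^{\beta}\bigr)$ and verify it is equitable with exactly the quotient matrix the paper states, your bookkeeping of the generators (including the $C^{(2)}$ row) being a spelled-out version of what the paper leaves implicit. Note only that your reading of adjacency as swapping the entries in position $1$ and position $i$ is indeed the convention under which the paper's claims (e.g.\ $(S_a^{\alpha})^{(1)}=S_a^{1}$) hold, so there is no discrepancy.
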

\begin{proof}
Firstly, we note that $(S_{a}^{\alpha})^{(1)}=S_{a}^{1}$,
$$(S_{a}^{\alpha})^{(2)}=\bigcup_{\beta\in \{1,\ldots,n\}\setminus\{1,\alpha\}}S_{a}^{\beta}$$ and
$(S_{a}^{\alpha})^{(3)}=\emptyset$.
On the other hand, the partition $((S_{a}^{\alpha})^{(0)},(S_{a}^{\alpha})^{(1)},(S_{a}^{\alpha})^{(2)})$ has the
quotient matrix
$$\begin{pmatrix}
n-2 & 1 & 0\\
1 & 0 & n-2\\
0 & 1 & n-2\\
\end{pmatrix}.$$
So, $S_{a}^{\alpha}$ is a completely regular code of covering radius $2$ in $S_n$.
\end{proof}
For a set $A\subseteq Sym_n$ we define the characteristic function $\chi_{A}$ of $A$ in $Sym_n$ as follows:
$$
\chi_{A}(\pi)=\begin{cases}
1,&\text{if $\pi\in A$;}\\
0,&\text{otherwise.}
\end{cases}
$$

Using Theorem \ref{MainTheorem} and the definition of $f_{u}^{v,w}$, we immediately obtain the following result.

\begin{lemma}\label{RaznostKodov}
Let $f$ be an $(n-2)$-eigenfunction of $S_n$ and $|Supp(f)|=2(n-1)!$, where $n\ge 8$ or $n=3$. Then $$f=c\cdot (\chi_{S_{u}^{v}}-\chi_{S_{u}^{w}}),$$ where $c$ is a real non-zero constant, $u\in\{1,\ldots,n\}$, $v,w\in\{2,\ldots,n\}$ and $v\neq w$.
\end{lemma}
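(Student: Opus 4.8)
The plan is to derive the statement directly from Theorem \ref{MainTheorem} together with an elementary identification of the basis eigenfunctions $f_u^{v,w}$ with differences of characteristic functions of the sets $S_u^v$. Since the heavy lifting (the classification of extremal eigenfunctions) is already completed in Theorem \ref{MainTheorem}, the remaining work is a purely formal translation.

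First I would invoke Theorem \ref{MainTheorem}. Because $f$ is an $(n-2)$-eigenfunction of $S_n$ with $|Supp(f)| = 2(n-1)!$ and $n \ge 8$ or $n = 3$, there exist a real non-zero constant $c$ and a function $\tilde{f} \in \mathcal{F}$ with $f = c \cdot \tilde{f}$. By the definition of $\mathcal{F}$, this means $\tilde{f} = f_u^{v,w}$ for some $u \in \{1,\ldots,n\}$ and $v,w \in \{2,\ldots,n\}$ with $v \neq w$.

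Next I would show that $f_u^{v,w} = \chi_{S_u^v} - \chi_{S_u^w}$ by evaluating both sides on an arbitrary $\pi \in Sym_n$. By the definition of the characteristic function, $\chi_{S_u^v}(\pi) = 1$ exactly when $\pi(v) = u$ and $\chi_{S_u^w}(\pi) = 1$ exactly when $\pi(w) = u$. The key observation is that, since $\pi$ is a bijection and $v \neq w$, the equalities $\pi(v) = u$ and $\pi(w) = u$ cannot hold simultaneously; hence on any given $\pi$ at most one of the two characteristic functions is non-zero. Splitting into the three resulting cases and comparing with the defining piecewise formula for $f_u^{v,w}$ shows the two functions agree at every $\pi$.

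Combining these two observations yields $f = c \cdot (\chi_{S_u^v} - \chi_{S_u^w})$ with $u \in \{1,\ldots,n\}$ and $v,w \in \{2,\ldots,n\}$, $v \neq w$, which is exactly the claim. There is no real obstacle here: the substantive content resides in Theorem \ref{MainTheorem}, and the only point requiring a moment's care is the bijectivity argument that rules out the degenerate overlap $S_u^v \cap S_u^w$ and thereby guarantees the difference of characteristic functions reproduces the values $1$, $-1$, and $0$ of $f_u^{v,w}$ correctly.
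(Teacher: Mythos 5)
Your proposal is correct and takes essentially the same route as the paper, which derives the lemma ``immediately'' from Theorem \ref{MainTheorem} together with the identification $f_{u}^{v,w}=\chi_{S_{u}^{v}}-\chi_{S_{u}^{w}}$. The disjointness observation you spell out (that $\pi(v)=u$ and $\pi(w)=u$ cannot hold simultaneously because $\pi$ is injective and $v\neq w$) is exactly the implicit detail behind the paper's one-line justification.
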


Thus, Lemma \ref{CompletelyRegularCode} and Lemma \ref{RaznostKodov} imply that for $n\ge 8$ and $n=3$ an arbitrary $(n-2)$-eigenfunction of $S_n$
with the minimum cardinality of the support is the difference of the characteristic functions of two completely regular codes of covering radius $2$.
It is very interesting that there is an analogue of this fact for the Hamming graph $H(n,q)$ (see \cite{Valyuzhenich}, Theorem 3).
Namely, an arbitrary eigenfunction of $H(n,q)$ corresponding to
the second largest eigenvalue with the minimum cardinality of the support is  the difference of the characteristic functions of two completely regular codes of covering radius $1$.

\section{Concluding remarks}
The initial problem of finding $(n-2)$-eigenfunctions of $S_n$ with the minimum size of the support is formulated for arbitrary real-valued functions from corresponding eigenspace. Surprisingly, Theorem \ref{MainTheorem} implies that such functions take only three distinct values.
It is interesting that the same fact holds for the Doob graph, for the Hamming graph and for the Johnson graph (see \cite{Bespalov,ValyuzhenichVorobev,VMV}).
But, in general case it is not true.
For example, in the Petersen and Desargues graphs there are $(-2)$-eigenfunctions with the minimum sizes of the supports that take five  distinct values (see \cite{SotnikovaCubical}, Figure 3 and 9).

We note that the restrictions for $n$ in Theorem \ref{MainTheorem} ($n\ge 8$ or $n=3$) arise from the proofs of Lemmas \ref{Arifmetica}, \ref{(A,B)Property} and \ref{RavnomernostMatrix}.

\section{Acknowledgements}

The authors are grateful to Sergey Goryainov for useful discussions.


\begin{thebibliography}{99}

\bibitem{AK89}
S.~B.~Akers, B.~Krishnamurthy, A group--theoretic model for symmetric interconnection networks, \emph{IEEE Trans. Comput.}, \textbf{38} (4) (1989) 555--566.

\bibitem{AKK16}
S.~V.~Avgustinovich, E.~N.~Khomyakova, E.~V.~Konstantinova, Multiplicities of eigenvalues of the Star graph, \emph{Siberian Electronic Mathematical Report}, \textbf{13} (2016) 1258--1270.

\bibitem{Bespalov}
E. A. Bespalov, On the minimum supports of some eigenfunctions in the Doob graphs, Siberian Electronic Mathematical Reports 15 (2018) 258--266.

\bibitem{BorgesRifaZinoviev} J. Borges, J. Rif\`{a}, V. A. Zinoviev, On Completely Regular Codes, Problems of Information Transmission 55(1) (2019) 1--45.



\bibitem{ChapuyFeray}
G. Chapuy, V. Feray, A note on a Cayley graph of $\mathrm{Sym}_{n}$ arXiv:1202.4976v2 (2012) 1--3.

\bibitem{Godsil}
 C. D. Godsil, Algebraic Combinatorics, Chapman and Hall Mathematics Series, Chapman Hall, New York, 1993.

\bibitem{GoryainovKabanovShalaginovValyuzhenich}
S. Goryainov, V. Kabanov, L. Shalaginov, A. Valyuzhenich, On eigenfunctions and maximal cliques of Paley graphs of square order, Finite Fields and Their Applications 52 (2018) 361--369.

\bibitem{GoryainovKabanovKonstantinovaShalaginovValyuzhenich}
S. Goryainov, V. V. Kabanov, E. Konstantinova, L. Shalaginov, A. Valyuzhenich. PI-eigenfunctions of the Star graphs arXiv:1802.06611, February 2018.

\bibitem{JLD91}
J.~S.~Jwo, S.~Lakshmivarahan, S.~K.~Dhall, Embedding of cycles and grids in star graphs, \emph{J. Circuits, Syst., Comput.}, \textbf{1} (1) (1991) 43--74.

\bibitem{KK15}
E.~N.~Khomyakova, E.~V.~Konstantinova, Note on exact values of multiplicities of eigenvalues of the Star graph, \emph{Siberian Electronic Mathematical Report}, \textbf{12} (2015), 92--100.

\bibitem{K18}
E.~N.~Khomyakova, On the eigenvalues multiplicity function of the Star graph, \emph{Siberian Electronic Mathematical Report}, \textbf{15} (2018) 1416--1425.

\bibitem{KrakovskiMohar}
R. Krakovski, B. Mohar, Spectrum of Cayley graphs on the symmetric group generated by transpositions, Linear Algebra and its Applications
437 (2012) 1033--1039.

\bibitem{Krotovtezic}
D. S. Krotov, Trades in the combinatorial configurations, XII International Seminar Discrete
Mathematics and its Applications, Moscow, 20--25 June 2016, 84--96 (in Russian).

\bibitem{KrotovMogilnykhPotapov}
D. S. Krotov, I. Yu. Mogilnykh, V. N. Potapov, To the theory of q-ary Steiner and other-type trades, Discrete Mathematics 339(3) (2016) 1150--1157.

\bibitem{Potapov}
V. N. Potapov, On perfect 2-colorings of the q-ary n-cube, Discrete Mathematics 312(6) (2012) 1269--1272.



\bibitem{SotnikovaCubical}
E. V. Sotnikova, Eigenfunctions supports of minimum cardinality in cubical distance-regular graphs, Siberian Electronic Mathematical Reports 15 (2018) 223--245.

\bibitem{SotnikovaBilinear}
E. V. Sotnikova, Minimum supports of eigenfunctions in bilinear forms graphs, Siberian Electronic Mathematical Reports 16 (2019) 501--515.

\bibitem{Valyuzhenich}
A. Valyuzhenich, Minimum supports of eigenfunctions of Hamming graphs, Discrete Mathematics 340(5) (2017) 1064--1068.

\bibitem{ValyuzhenichVorobev}
A. Valyuzhenich, K. Vorob'ev, Minimum supports of functions on the Hamming graphs with spectral constraints,  Discrete Mathematics 342(5) (2019) 1351--1360.

\bibitem{VorobevKrotov}
K. V. Vorobev, D. S. Krotov, Bounds for the size of a minimal 1-perfect
bitrade in a Hamming graph, Journal of Applied and Industrial Mathematics 9(1) (2015) 141--146.

\bibitem{VMV}
K. Vorob'ev, I. Mogilnykh, A. Valyuzhenich, Minimum supports of eigenfunctions of Johnson graphs, Discrete Mathematics 341(8) (2018) 2151--2158.

\end{thebibliography}
\end{document}